\newtheorem{introthm}{Theorem}
\colorlet{GREEN}{green}
\colorlet{BLUE}{blue}
\definecolor{darkgreen}{rgb}{0,0.50,0} 
\definecolor{darkred}{rgb}{0.55,0,0}
\definecolor{darkblue}{rgb}{0,0,0.6}
\def\makeautorefname#1#2{\expandafter\def\csname#1autorefname\endcsname{#2}}
\newtheorem{thm}{Theorem}[section]
\newtheorem{theorem}{Theorem}[section]
\newtheorem{prop}{Proposition}[section]
\newtheorem{lemma}{Lemma}[section]
\theoremstyle{definition}
\newtheorem{definition}{Definition}[section]
\newtheorem{defn}{Definition}[section]
\newtheorem{exmp}{Example}[section]
\newtheorem{rem}{Remark}[section]
\newtheorem{sch}{Scholium}[section]
\let\c@cor=\c@thm
\let\c@prop=\c@thm
\let\c@proposition=\c@thm
\let\c@theorem=\c@thm
\let\c@lem=\c@thm
\let\c@definition=\c@thm
\let\c@conj=\c@thm
\let\c@defn=\c@thm
\let\c@df=\c@thm
\let\c@exmp=\c@thm
\let\c@rem=\c@thm
\let\c@lemma=\c@thm
\let\c@sch=\c@thm
\let\c@con=\c@thm
\let\c@equation\c@thm
\newcommand{\im}{\mathrm{Im}}
\newcommand{\SK}{\mathrm{SK}}
\newcommand{\SKK}{\mathrm{SKK}}
\newcommand{\id}{\mathrm{id}}
\def\C{\mathcal{C}}
\def\d{\partial}
\newcommand{\R}{\mathbb R}
\newcommand{\Z}{\mathbb Z}
\newcommand{\Mnfldbd}{\textup{Mfd}^\partial}
\newcommand{\Fun}{\textup{Fun}}
\newcommand{\ChB}{\textup{Ch}^{b}}
\newcommand{\ModFG}{\textup{Mod}^{\textup{fg}}}
\newcommand{\ModPROJ}{\textup{Mod}^{\textup{proj}}}
\newcommand{\ChPerf}{\textup{Ch}^{\textup{hb}}}
\newcommand{\sC}{\mathcal{C}}
\newcommand{\mona}[1]{{\color{black}{#1}}}
\newcommand{\carmen}[1]{{\color{black}{#1}}}
\newcommand{\laura}[1]{{\color{black}{#1}}}
\newcommand{\julia}[1]{{\color{black}{#1}}}
\newcommand{\lout}{\bgroup\markoverwith
	{\textcolor{blue}{\rule[.5ex]{2pt}{0.4pt}}}\ULon}
\newcommand{\jout}{\bgroup\markoverwith
	{\textcolor{blue}{\rule[.5ex]{2pt}{0.4pt}}}\ULon}
\newcommand{\cout}{\bgroup\markoverwith
	{\textcolor{blue}{\rule[.6ex]{3pt}{0.6pt}}}\ULon}
	\newcommand{\mout}{\bgroup\markoverwith
	{\textcolor{blue}{\rule[.6ex]{3pt}{0.6pt}}}\ULon}
\newcommand\rout{\bgroup\markoverwith
	{\textcolor{blue}{\rule[.5ex]{2pt}{0.4pt}}}\ULon}
\newcommand{\sbt}{\,\begin{picture}(-1,1)(0.5,-1)\circle*{1.8}\end{picture}\hspace{.05cm}}
\newlength{\storeparskip}
\author[R. S. Hoekzema, M. Merling, L. Murray, C. Rovi and J. Semikina]{Renee  S. Hoekzema, Mona Merling, Laura Murray, \\ Carmen Rovi and Julia Semikina}
\date{}
\title{Cut and paste invariants of manifolds via algebraic $K$-theory}
\begin{document}

\begin{abstract} Recent work of Inna Zakharevich and Jonathan Campbell has focused on building machinery for studying scissors congruence problems via algebraic $K$-theory, and applying these tools to studying the Grothendieck ring of varieties. In this paper we give a new application of their framework: we construct a $K$-theory space that recovers the classical $\SK$ (``schneiden und kleben,"  German for ``cut and paste") groups for manifolds on $\pi_0$, and we construct a derived version of the Euler characteristic.   

\end{abstract}

\maketitle

\vspace{-2ex}

\begingroup%
\setlength{\parskip}{\storeparskip}
\tableofcontents
\endgroup%

\setcounter{section}{0}

\section{Introduction}
The classical scissors congruence problem asks whether given two polyhedra with the same volume $P$ and $Q$ in $\R^3$, one can cut $P$ into a finite number of smaller polyhedra and reassemble these to form $Q$. Precisely, $P$ and $Q$ are scissors congruent if $P=\bigcup_{i=1}^m P_i$ and $Q=\bigcup_{i=1}^m Q_i$, where $P_i\cong Q_i$ for all $i$, and the subpolyhedra in each set only intersect each other at edges or faces. There is an analogous definition of an $\SK$ (German ``schneiden und kleben," cut and paste)  relation for manifolds: Given a closed smooth oriented manifold $M$, one can cut it along a separating codimension $1$ submanifold $\Sigma$ with trivial normal bundle and paste back the two pieces along an orientation preserving diffeomorphism $\Sigma\rightarrow \Sigma$ to obtain a new manifold, which we say is ``cut and paste equivalent" or ``scissors congruent" to it. We give a pictorial example of this relation:
\vspace{15pt}
\begin{figure}[ht!]
\labellist
\small\hair 2pt
\pinlabel \small {\textit{1. Start with} $T^2$} at 72 330
\pinlabel \small {\textit{2. Cut along four copies of $S^1$} } at 327 330
\pinlabel \small { \textit{3. Paste back along boundaries}} at 637 330
\endlabellist
\centering
\includegraphics[scale=0.5]{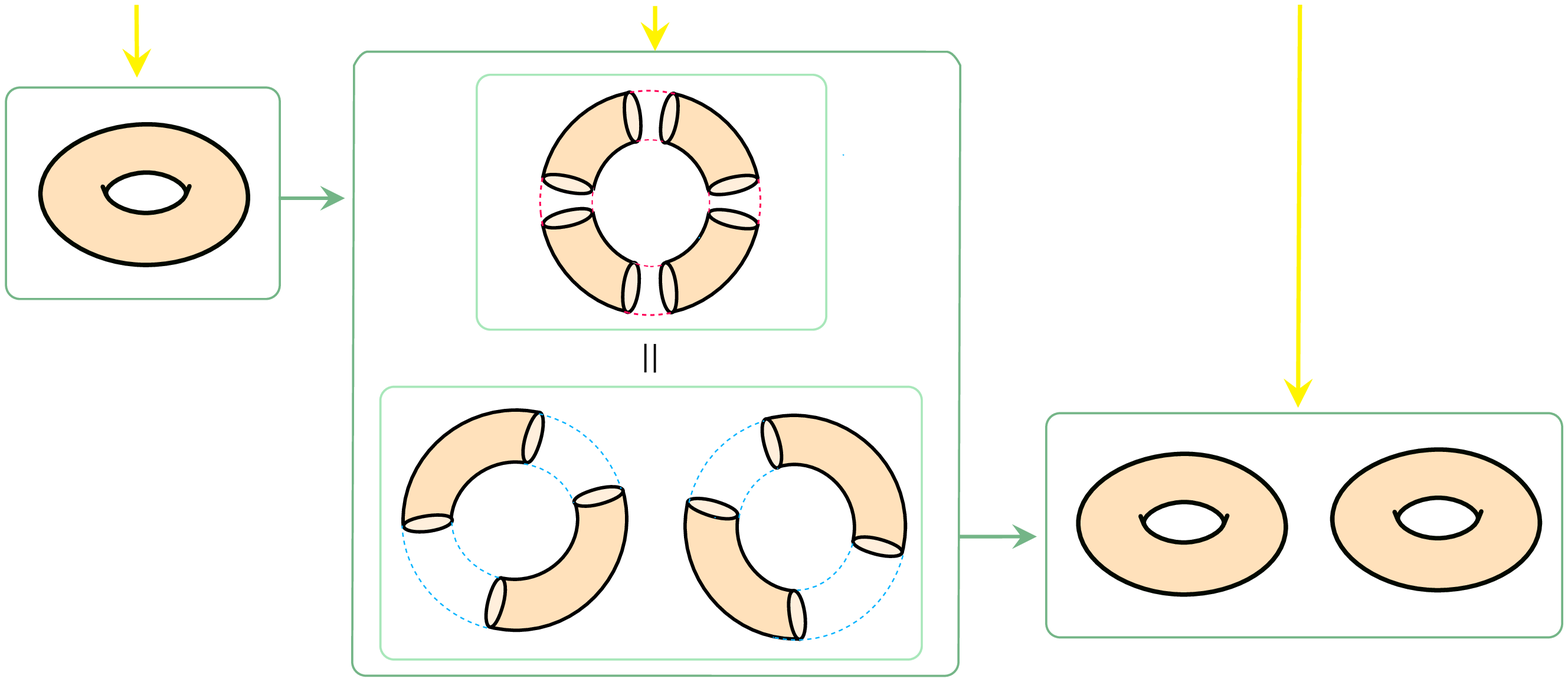}
\caption{Example of a cut and paste operation}
\label{fig:SK-equiv}
\end{figure}

Zakharevich has formalized the notion of scissors congruence via the notion of an \emph{assembler}--this is a Grothendieck site with a few extra properties, whose topology encodes the cut and paste operation. She constructs an associated $K$-theory spectrum, which on $\pi_0$ recovers classical scissors congruence groups \cite{Inna-assemblers}. Specific examples of assemblers  recover scissors congruence groups for polytopes and the Grothendieck ring of varieties, as $\pi_0$ of their corresponding $K$-theory spectra. The higher $K$-groups encode further geometric information. Independently, Campbell has  introduced the formalism of \emph{subtractive categories}, a modification of the definition of Waldhausen categories, to define a $K$-theory spectrum of varieties that recovers the Grothendieck ring of varieties on $\pi_0$ \cite{Campbell}. Though the approaches to encoding  scissors congruence abstractly are different, the resulting spectra of Zakharevich and Campbell are shown to be equivalent in \cite{inna-cgw}.
 
 The focus of Zakharevich and Campbell has been to construct and study a $K$-theory spectrum of varieties, and this spectrum level lift of the Grothendieck ring of varieties has led to a fruitful research program to better understand varieties. For example, an analysis of $K_1$ for the $K$-theory spectrum of varieties allowed Zakharevich to elucidate structure on the annihilator of the Lefschetz motive \cite{inna-lefschetz}, and Campbell, Wolfson and Zakharevich use a lift of the zeta function for varieties  to show that $\pi_1$ of the $K$-theory spectrum for varieties contains nontrivial geometric information  \cite{CWZ}. Studying cut and paste relations for manifolds via  $K$-theoretic machinery remains as of yet unexplored. We start this exploration in this paper.
 
 Unfortunately, the framework from \cite{Inna-assemblers, Campbell} does not directly apply to the case of manifolds. The problem   is that if one tries to find a common refinement of two different SK-decompositions of a manifold, one might have to cut boundaries and one gets manifolds with corners. This makes some of the axioms in both the assembler approach and the subtractive category approach break down. However, work in progress of Campbell and Zakharevich  on ``$K$-theory with squares," $K^\square$, a further synthetization of scissors congruence relations as $K$-theory that generalizes Waldhausen $K$-theory, does give the right framework to construct the desired scissors congruence $K$-theory for manifolds. Encompassing the manifold example was also one of the motivations behind Campbell and Zakharevich's development of ``$K$-theory with squares". 

The study of $\SK$-invariants and $\SK$-groups in \cite{KKNO} focuses on closed manifolds. However, in order for the $K^{\square}$-theoretic scissors congruence machinery to apply, we need  to work in the category of manifolds with boundary, since the pieces in an $\SK$-decomposition have boundary. This is not well-explored classically, as most of the existing work on $\SK$-groups is for closed manifolds. We generalize the notion of $\SK$-equivalence to the case of manifolds with boundary and denote the corresponding group by $\SK^{\partial}_{n}.$ Our definition of  $\SK^{\partial}_{n}$ is different from the one mentioned in \cite{KKNO} in that we insist that every boundary along which  we cut gets pasted, and this is crucial for the further application of the $K$-theoretic technology. 

We formulate a suitable notion of a category with squares $\Mnfldbd_n$, that fits into the $K$-theory with squares framework, and whose  distinguished squares  exactly encode the ``cut-and-paste" relations for $n$-dimensional manifolds with boundary. We show that the \mona{$K$-theory space}  obtained from the construction of Campbell and Zakharevich, applied to $\Mnfldbd_n$, which we denote by $K^{\square}(\Mnfldbd_n)$, recovers the $\SK^{\partial}_n$ as its zeroth homotopy group:

 \begin{introthm}\label{k0intro}
 There is an isomorphism $K_0^{\square}(\Mnfldbd_n) \cong \SK^{\partial}_n,$ where $K_0^{\square}(\Mnfldbd_n)$ is $\pi_0$ of a scissors congruence $K$-theory \mona{space} $K^{\square}(\Mnfldbd_n)$.
 \end{introthm}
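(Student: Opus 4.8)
The plan is to establish the isomorphism $K_0^{\square}(\Mnfldbd_n) \cong \SK^{\partial}_n$ by unwinding both sides into explicit generators-and-relations presentations and checking they match. For this I assume the general fact (coming from the Campbell--Zakharevich $K$-theory with squares framework) that $K_0^{\square}$ of a category with squares is the free abelian group on isomorphism classes of objects, modulo the relations imposed by the distinguished squares. Concretely, $K_0^{\square}(\Mnfldbd_n)$ is generated by symbols $[M]$ for objects $M$ of $\Mnfldbd_n$, subject to $[M] = [M'] + [M'']$ whenever there is a distinguished square exhibiting $M$ as built from $M'$ and $M''$, together with the additivity $[M \sqcup N] = [M] + [N]$ coming from the (co)product structure. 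My first step is to record this presentation precisely for our particular category $\Mnfldbd_n$, reading off from the definition of the distinguished squares exactly which relations arise: a distinguished square should witness the cut-and-paste operation, so each one yields a relation of the form $[M] = [M_1] + [M_2]$ where $M_1, M_2$ are the two pieces obtained by cutting $M$ along an admissible separating codimension-$1$ submanifold.

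Next I would write down the parallel presentation of the group $\SK^{\partial}_n$ from its definition. By construction $\SK^{\partial}_n$ is the free abelian monoid (or group) on diffeomorphism classes of $n$-manifolds with boundary under disjoint union, modulo the cut-and-paste relation: $M \sim N$ whenever $N$ is obtained from $M$ by cutting along an admissible $\Sigma$ and regluing via an orientation-preserving self-diffeomorphism of $\Sigma$, with the stipulation (emphasized in the excerpt) that every boundary produced by a cut is pasted back. The key observation is that the SK relation $M \sim N$ can be reformulated additively: both $M$ and $N$ decompose into the same two pieces $M_1, M_2$ along $\Sigma$, differing only by the regluing diffeomorphism, so in the Grothendieck group the relation $M \sim N$ is \emph{generated by} the splitting relations $[M] = [M_1] + [M_2]$. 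This is the bridge between the two presentations.

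The main step is then to exhibit mutually inverse homomorphisms. I would define $\Phi\colon \SK^{\partial}_n \to K_0^{\square}(\Mnfldbd_n)$ on generators by $[M]_{\SK} \mapsto [M]_{K}$, and check it is well-defined by verifying that the defining SK relations map to relations holding in $K_0^{\square}$ — this is where the additive reformulation above is used, since each cut-and-paste move is a composite of splitting-and-reassembling steps, each realized by a distinguished square. Conversely I would define $\Psi\colon K_0^{\square}(\Mnfldbd_n) \to \SK^{\partial}_n$ by sending each generator $[M]_K$ to its SK-class, and check that every distinguished-square relation $[M] = [M_1]+[M_2]$ maps to a valid identity in $\SK^{\partial}_n$, namely that $M$ and $M_1 \sqcup M_2$ become equal after the appropriate regluing — equivalently that the trivial regluing recovers $M$ so the splitting relation is an instance of SK-equivalence. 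Since both maps are identities on generators, $\Phi$ and $\Psi$ are inverse, giving the isomorphism.

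The hard part will be the well-definedness checks, and in particular pinning down the exact correspondence between distinguished squares in $\Mnfldbd_n$ and the admissible cuts in the definition of $\SK^{\partial}_n$. The subtlety flagged in the excerpt — that cutting can produce corners, and that our convention forces every cut boundary to be repasted — means one must be careful that the squares encode \emph{precisely} the SK moves and no more (and no fewer): I must confirm that the distinguished squares do not introduce spurious relations beyond those generated by admissible cut-and-paste operations, and that conversely every SK move is realized by some distinguished square. Handling the corners correctly, and checking that the additivity relation from disjoint union is compatible with the monoid structure on $\SK^{\partial}_n$, is where the bulk of the genuine work lies; once the two presentations are seen to have identical generators and identical relations, the isomorphism is formal.
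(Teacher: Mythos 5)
Your proposal rests on two presentations that do not match the actual ones, and both misstatements are fatal. First, the relation that a distinguished square imposes on $K_0^{\square}$ is the \emph{four-term} relation $[A]+[D]=[B]+[C]$ for a square with corners $A\rightarrowtail B$, $A\hookrightarrow C$, $D= B\cup_A C$ — not the splitting relation $[D]=[B]+[C]$. The overlap term $[A]$ cannot be dropped: in $\Mnfldbd_n$ the distinguished squares are pushouts along a codimension-zero submanifold (in practice a collar $\Sigma\times[0,\varepsilon]$), and the corner $A$ appears in the relation. Second, and more seriously, your ``key observation'' that the $\SK^{\partial}$-relation is generated by splitting relations $[M]=[M_1]+[M_2]$ is false, and it is exactly the point the paper is careful about: the relation $M_1\cup_\phi M_2\sim M_1\sqcup M_2$ is the \emph{other} definition (the one in KKNO, Chapter 5), which the authors explicitly reject because it gives a different group. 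Concretely, the Euler characteristic is an $\SK^{\partial}$-invariant, but $\chi(M_1\cup_\Sigma M_2)=\chi(M_1)+\chi(M_2)-\chi(\Sigma)$; taking $D^3\cup_{S^2}D^3=S^3$ gives $0\neq 2$, so the splitting relation does not hold in $\SK^{\partial}_3$. Consequently both of your well-definedness checks would fail: $\Psi$ would send a genuine $K_0^{\square}$ relation to a non-relation in $\SK^{\partial}_n$, and $\Phi$'s well-definedness, as you argue it, relies on relations that are not available.

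What is actually needed (and what the paper does, after verifying the hypotheses of the Campbell--Zakharevich $K_0$ lemma, which you assume but never check) are two nontrivial geometric constructions matching the four-term relation against the regluing relation. In one direction, to see that $[M\cup_\phi M']=[M\cup_\psi M']$ holds in $K_0^{\square}$, one realizes both gluings as pushouts over a collar $\Sigma\times[0,\varepsilon]$, obtaining $[M\cup_\phi M']+[\Sigma\times[0,\varepsilon]]=[M]+[M']=[M\cup_\psi M']+[\Sigma\times[0,\varepsilon]]$ and cancelling the collar term. In the other direction, to derive $[A]+[D]=[B]+[C]$ from the $\SK^{\partial}$-relations for a distinguished square, one sets $N:=A\cap \mathrm{cl}(B-A)$, $M:=\mathrm{cl}(B-A)\sqcup(N\times[0,\varepsilon])$, $M':=A\sqcup C$, and compares the identity gluing with the twist gluing $\tau$ on $N\sqcup N$: then $M\cup_{\id}M'\cong B\sqcup C$ while $M\cup_\tau M'\cong A\sqcup D$, so a single cut-and-paste move yields $[B]+[C]=[A]+[D]$. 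Neither of these steps is formal, and neither appears in your outline; without them the claimed dictionary between the two presentations does not exist.
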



Scissors congruence invariants for manifolds ($\SK$-invariants) are abelian group valued homomorphisms from the monoid of manifolds under disjoint union, which factor through the $\SK$-group. It is well known classically that for closed manifolds  the Euler characteristic, the signature, and linear combinations thereof, are the only $\SK$-invariants, and these are also $\SK$-invariants of manifolds with boundary. In this paper, we show that the Euler characteristic as a map to $\Z$, viewed as the zeroth $K$-theory group of $\Z$, is the $\pi_0$ level of a map of \mona{$K$-theory spaces} from the scissors congruence \mona{space} for manifolds with boundary that we define. 
In future work, we plan to also investigate the signature map to the zeroth $L$-theory group of $\Z$.
  \begin{introthm}\label{Eulerthminto}
 There is a map of $K$-theory \mona{spaces} $$K^\square(\Mnfldbd)\to K(\Z),$$ which on $\pi_0$ agrees with the Euler characteristic for smooth compact manifolds with boundary.
 \end{introthm}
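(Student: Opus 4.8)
The plan is to produce the map first at the level of categories with squares and then apply the $K^\square$ construction, which is functorial for maps of such. Concretely, I would build a functor of categories with squares
\[
C_\bullet \colon \Mnfldbd \longrightarrow \ChPerf(\Z),
\]
where $\ChPerf(\Z)$ is the category of homologically bounded chain complexes of $\Z$-modules with finitely generated homology and quasi-isomorphisms as weak equivalences, assigning to a compact manifold (possibly with boundary and corners) a complex computing its integral homology. Every Waldhausen category is a category with squares whose distinguished squares are its cofiber (cocartesian) squares, and on such inputs $K^\square$ restricts to Waldhausen $K$-theory; since $\Z$ is regular, $K^\square(\ChPerf(\Z)) \simeq K(\Z)$. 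Applying $K^\square$ to $C_\bullet$ then yields the desired map $K^\square(\Mnfldbd) \to K(\Z)$, so that the mathematical content lies entirely in checking that $C_\bullet$ is a genuine map of categories with squares, and in identifying its effect on $\pi_0$.

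For the $\pi_0$ identification I would combine \autoref{k0intro}, which gives $K_0^\square(\Mnfldbd_n) \cong \SK^\partial_n$, with the classical fact that the class of a perfect complex in $K_0(\Z) \cong \Z$ is its Euler characteristic. A manifold $M$ is then sent to
\[
[C_\bullet(M)] \;=\; \sum_i (-1)^i \rank H_i(M;\Z) \;=\; \chi(M),
\]
so on $\pi_0$ the map is exactly $M \mapsto \chi(M)$. Note that this simultaneously recovers the classical fact that $\chi$ descends to the $\SK$-group: additivity under disjoint union and invariance under cut-and-paste are forced by $C_\bullet$ respecting coproducts and distinguished squares.

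The heart of the argument, and the step I expect to be the main obstacle, is verifying that $C_\bullet$ carries the two classes of morphisms and the distinguished ``cut-and-paste'' squares of $\Mnfldbd$ to the corresponding structure on $\ChPerf(\Z)$. The codimension-$0$ inclusions and cut/restriction maps must land in the admissible monomorphisms and their companion morphisms, while a distinguished square recording a cut of $M$ along a separating codimension-$1$ submanifold $\Sigma$ into pieces $A$ and $B$ with $A \cap B = \Sigma$ must be sent to a homotopy pushout (bicartesian) square
\[
\begin{CD}
C_\bullet(\Sigma) @>>> C_\bullet(B) \\
@VVV @VVV \\
C_\bullet(A) @>>> C_\bullet(M).
\end{CD}
\]
This is precisely the Mayer--Vietoris/excision property of singular chains, and the trivial-normal-bundle hypothesis supplies the bicollar of $\Sigma$ that makes the geometric decomposition an honest homotopy pushout. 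The resulting alternating-sum relation $[C_\bullet(M)] = [C_\bullet(A)] + [C_\bullet(B)] - [C_\bullet(\Sigma)]$ is exactly the inclusion--exclusion identity $\chi(M) = \chi(A) + \chi(B) - \chi(\Sigma)$ underlying $\SK$-invariance of $\chi$.

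The remaining technical subtlety is to choose a model for $C_\bullet$ that is at once strictly functorial on $\Mnfldbd$, valued in $\ChPerf(\Z)$, and compatible with the squares structure. Singular chains are functorial and, for compact manifolds, have finitely generated homology, so they lie in $\ChPerf(\Z)$; one must then check that the cut squares remain bicartesian in this homologically bounded setting and that the corners arising when cutting along $\partial M$ do not disturb excision. I would verify each of these compatibilities directly against the definition of the distinguished squares of $\Mnfldbd$, reducing every clause to a standard homotopy-colimit computation for chains.
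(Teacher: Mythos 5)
Your overall strategy --- singular chains as a functor of categories with squares into homologically bounded complexes, Mayer--Vietoris to handle the cut squares, a comparison $K^\square(\ChPerf_{\mathbb{Z}})\simeq K(\Z)$, and the $\pi_0$ identification via the alternating sum of ranks of homology --- is exactly the paper's. But there is one genuine gap, and it sits precisely where your proposal is internally inconsistent: you declare that the distinguished squares of the target are the \emph{cocartesian} squares of the Waldhausen category, and two paragraphs later you (correctly) observe that singular chains send a cut square only to a \emph{homotopy} pushout. These two statements cannot both stand. If a manifold $D$ is glued from $B$ and $C$ along $A$, the canonical map $S(B)\cup_{S(A)}S(C)\to S(D)$ is in general only a quasi-isomorphism, never an isomorphism of complexes, so with strictly cocartesian distinguished squares the chain functor is \emph{not} a map of categories with squares and the construction of the map on $K$-theory collapses at its first step.

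The repair is to define the squares structure on $\ChPerf_{\mathbb{Z}}$ so that a commutative square with horizontal cofibrations is distinguished exactly when the induced map $C\cup_A B\to D$ is a weak equivalence, and to allow \emph{all} maps (not only cofiber maps) as vertical maps, since the images under $S$ of the vertical embeddings of manifolds need not be cofiber maps; this is the paper's \autoref{squareC}, and \autoref{waldchoice} flags exactly this point as the reason for that choice. This repair, however, creates an obligation you have silently discharged: it is no longer automatic that $K^\square$ of this structure computes Waldhausen $K$-theory. The paper proves this (\autoref{sqwald=wald}) by identifying the resulting bisimplicial object with Thomason's simplicial category $wT_\bullet\sC$ and invoking the zig-zag of equivalences $wT_\bullet\sC \leftarrow wT^+_\bullet\sC \to wS_\bullet\sC$; your assertion that ``$K^\square$ restricts to Waldhausen $K$-theory'' needs this argument (or a citation) for the homotopy-pushout structure, not for the cocartesian one. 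Once these two points are fixed, the rest of your outline (regularity of $\Z$ together with resolution, Gillet--Waldhausen and approximation to get $K(\ChPerf_{\mathbb{Z}})\simeq K(\Z)$, and the $K_0$ computation $[S(M)]\mapsto\chi(M)$) matches the paper. One further small correction: in the paper's category $\Mnfldbd_n$ no corners ever arise --- cuts are required to be disjoint from $\partial M$, and embeddings send each boundary component either into the interior or onto boundary --- so your worry about corners disturbing excision addresses a difficulty that the framework is designed to exclude.
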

 
All the scissors congruence space constructions are in fact infinite loop spaces, and it is not hard to see that all of our maps of $K$-theory spaces lift to the spectrum level. The spectrum level elborations will appear in future work of Campbell, Zakharevich and collaborators.

The paper is organized as follows. In \autoref{SKdefsec} we introduce the definition of $\SK$-groups 
for smooth compact manifolds with boundary and we prove that they are related to the classical $\SK$ 
groups for smooth closed manifolds via an exact sequence. In \autoref{squareK} we review the set-up of categories with squares and their $K$-theory as defined by Campbell and Zakharevich. In \autoref{mnfldsec} we construct the category of squares for smooth compact manifolds with boundary and prove  \autoref{k0intro}, and in \autoref{Eulersec} we prove \autoref{Eulerthminto}.

\subsection*{Conventions} All manifolds in this paper are smooth, compact and oriented. We will distinguish between closed manifolds and manifolds with boundary. We will use the notation $\bar{M}$ for the manifold $M$ with reversed orientation.

\subsection*{Acknowledgements} 
We are greatly indebted  to Jonathan Campbell and Inna Zakharevich for their generosity in sharing their work in progress on squares $K$-theory, which our project relies on, and for their extensive patience in explaining it to us and answering our questions.  \mona{We thank the anonymous referee and Jim Stasheff for close readings and suggestions that greatly improved the paper. We particularly thank George Raptis for catching a mistake in a previous version of the paper regarding our definition of an $SKK$ group for manifolds with boundary---we owe the observation in \autoref{scholium} to him.}
It is a great pleasure to also acknowledge the contributions to this project arising from discussions with Andrew Blumberg, Anna Marie Bohmann, Jonathan Block, Jim Davis, Greg Friedman, S\o ren Galatius, Teena Gerhardt, Herman Gluck, Mike Hill, Fabian Hebestreit, Matthias Kreck, Malte Lackmann, Wolfgang L{\"u}ck, Mike Mandell, Cary Malkiewich, Kate Ponto, Jan Steinebrunner, Peter Teichner, Andrew Tonks, and Chuck Weibel. 

Finally, we thank the organizers of the Women in Topology III program and the Hausdorff Research Institute for Mathematics for their hospitality during the workshop. The WIT III workshop was supported through grants NSF-DSM 1901795, NSF-HRD 1500481 - AWM ADVANCE grant
and the Foundation Compositio Mathematica, and we are very grateful for their support. 
The first named author was supported by the Max Planck Society.
The second named author was  supported by grants NSF-DMS 1709461, NSF CAREER DMS 1943925 and NSF FRG DMS-2052988. The third named author was  supported by grant NSF-DMS 1547292. 
The fourth named author is supported by the Deutsche Forschungsgemeinschaft (DFG, German Research Foundation) under Germany’s Excellence Strategy EXC2181/1-390900948 (the Heidelberg STRUCTURES Excellence Cluster), and also wishes to acknowledge support by the Deutsche Forschungsgemeinschaft (DFG, German Research Foundation) 281869850 (RTG 2229).
The fifth named author was supported by the Max Planck Society and Wolfgang L\"uck’s ERC Advanced Grant ``KL2MG-interactions'' (no. 662400).

\section{Scissors congruence groups for manifolds with boundary}\label{SKdefsec}

\subsection{SK-groups for closed manifolds} We start by reviewing the definitions of the classical scissors congruence groups of smooth closed  oriented manifolds, namely the $\SK_n$-groups introduced in \cite{KKNO}. The ``scissors congruence" or ``cut and paste" relation on smooth closed oriented  manifolds is given as follows: cut an $n$-dimensional manifold $M$ along a codimension 1 smooth submanifold  $\Sigma$ with trivial normal bundle that separates $M$ in the sense that the complement of $\Sigma$ in $M$ is a disjoint union of two components $M_1$ and $M_2$, each with boundary diffeomorphic to $\Sigma$. Then paste back the two pieces together along an orientation preserving diffeomorphism $\phi\colon \Sigma\to\Sigma.$ We say $M$ and $M_1\cup_\phi M_2$ are ``cut and paste equivalent" or ``scissors congruent." 

Note that for a codimension 1 submanifold $\Sigma$ with trivial normal bundle that does not separate $M$ (for example the inclusion of $S^1\times\{0\}$ into $S^1\times S^1$) we can take the union with a second copy of  $\Sigma$ embedded close to it, and the disjoint union $\Sigma \sqcup \Sigma$ then separates $M$. 

\begin{definition}
Two smooth closed manifolds $M$ and $N$ are \emph{$SK$-equivalent} (or \emph{scissors congruent} or \emph{cut and paste equivalent}) if $N$ can be obtained from $M$ by a finite sequence of cut and paste operations.
\end{definition}

\begin{exmp}
In  \autoref{SK-relation-closed} we can see that $T^2 ~ \sharp ~ T^2 \sqcup S^2$   is SK-equivalent to $T^2 \sqcup T^2$. 
\end{exmp}
\begin{figure}[ht!]
\labellist
\small\hair 2pt
\pinlabel \small {$\phi  \downarrow$} at -5 92
\pinlabel \small {$\psi  \downarrow$} at 195 92
\pinlabel \small {$\simeq$} at 153 92
\pinlabel \small {$\in \SK_2$} at 356 92
\endlabellist
\centering
\includegraphics[scale=0.7]{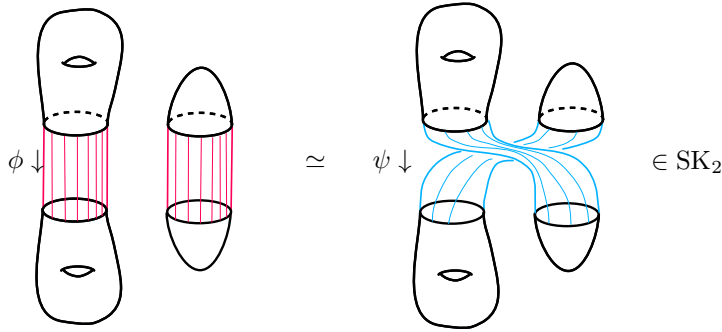}
\caption{Example of an $\SK$-relation}
\label{SK-relation-closed}
\end{figure}



%

 Let $\mathcal{M}_n$ be the monoid of diffeomorphism classes of smooth closed oriented $n$-dimensional manifolds $[M]$ under disjoint union. The  $\SK_n$-group from \cite{KKNO}  is defined to satisfy the universal property that any abelian valued monoid map from $\mathcal{M}_n$ which respects $\SK$-equivalence (also called an $\SK$-invariant) factors through it. 

\begin{definition} \label{def of SK_n}
 The \emph{scissors congruence group $\SK_n$ for smooth closed  oriented $n$-dimensional manifolds} is the quotient of the Grothendieck group $\textup{Gr}(\mathcal{M}_n)$ by the $\SK$-equivalence relation. 
  
  Explicitly, $\SK_n$ is the free abelian group on diffeomorphism classes  $[M]$ modulo the following relations:
   \begin{enumerate}
    \item $[M \sqcup N] = [M] + [N]$;
     \item Given compact oriented manifolds $M_1, M_2$ and orientation preserving diffeomorphisms $\phi, \psi \colon \partial M_1 \to \partial M_2$,
\[ [M_1 \cup_{\phi} \bar{M_2}] = [M_1 \cup_{\psi} \bar{M_2}],\]
where $\bar{M_2}$ is $M_2$ with reversed orientation.
   \end{enumerate}
\end{definition}


\carmen{\begin{rem}\label{SKinvariants} It is shown in \cite[Corollary 1.4]{KKNO} that any SK-invariant for smooth oriented manifolds is a linear combination of the Euler characteristic and the signature.  \end{rem}}

\subsection{SK-groups for  manifolds with boundary}
We note that in order to define a scissors congruence $K$-theory, we need to work in a category of manifolds with boundary since the pieces in the cut and paste relation are manifolds with boundary. Therefore, we introduce a definition of $\SK$-groups for manifolds with boundary; these are the groups  which we will recover as $\pi_0$ of a scissors congruence $K$-theory space.

We define the ``cut and paste relation" on smooth compact manifolds with boundary analogously to that on closed manifolds: cut an $n$-dimensional manifold $M$ along a codimension 1 smooth submanifold  $\Sigma$ with trivial normal bundle, which separates $M$, and for which $\Sigma\cap \partial M=\emptyset$. Then paste back the two pieces together along an orientation preserving diffeomorphism $\phi\colon \Sigma\to\Sigma.$ We emphasize that we do not allow boundaries to be cut, and we require that  all boundaries which come from cutting to be pasted back together, leaving the existing boundaries of a manifold untouched by the cut and paste operation. 

\begin{definition} Two smooth compact manifolds with boundary will be called \emph{$\SK$-equivalent} if one can be obtained from the other via a finite sequence of cut and paste operations in the sense described above.\end{definition}

\begin{rem}
Our definition of the cut and paste relation for manifolds with boundary is different than the one in \cite[Chapter 5]{KKNO}, where $M_1\cup_\phi M_2\sim M_1\sqcup M_2$. Namely, they allow pieces that are cut to not be pasted back together. 
 In order to apply the $K$-theoretic machinery to obtain the $\SK_n^{\partial}$-group as $\pi_0$ of a $K$-theory space, it is important to use our definition of $\SK^{\partial}_n$.
\end{rem}

\begin{definition} \label{def of SK_n with boundary}
Let $\mathcal{M}_n^\d$ be the monoid of diffeomorphism classes of  smooth compact oriented $n$-dimensional manifolds with boundary  under disjoint union. The \emph{scissors congruence group $\SK_n^\d$ for smooth compact oriented manifolds} is the quotient of the Grothendieck group $\mathrm{Gr}(\mathcal{M}_n^\d)$ by the $\SK$-equivalence relation. 

Explicitly, $\SK^{\partial}_n$ is the free abelian group on diffeomorphism classes of smooth compact oriented $n$-dimensional manifolds (with or without boundary) modulo the following relations:
  \begin{enumerate}
    \item $[M \sqcup N] \sim [M] + [N]$;
     \item Given compact oriented manifolds $M_1, M_2$, closed submanifolds $\Sigma \subseteq \partial M_1$ and $\Sigma' \subseteq \partial M_2$, and orientation preserving diffeomorphisms $\phi, \psi \colon \Sigma \to \Sigma'$,
\[ [M_1 \cup_{\phi} \bar{M_2}] = [M_1 \cup_{\psi} \bar{M_2}].\]
   \end{enumerate}
\end{definition}

\begin{exmp} In \autoref{SK-relation-boundary} we see an example of an $SK_n^{\partial}$-relation.
\end{exmp}

\begin{figure}[ht!]
\labellist
\small\hair 2pt
\pinlabel \small {$\phi  \downarrow$} at 70 154
\pinlabel \small {$\psi  \downarrow$} at 490 154
\pinlabel \small {$\simeq$} at 370 154
\pinlabel \small {$\in \SK^{\partial}_n$} at 810 154
\endlabellist
\centering
\includegraphics[scale=0.42]{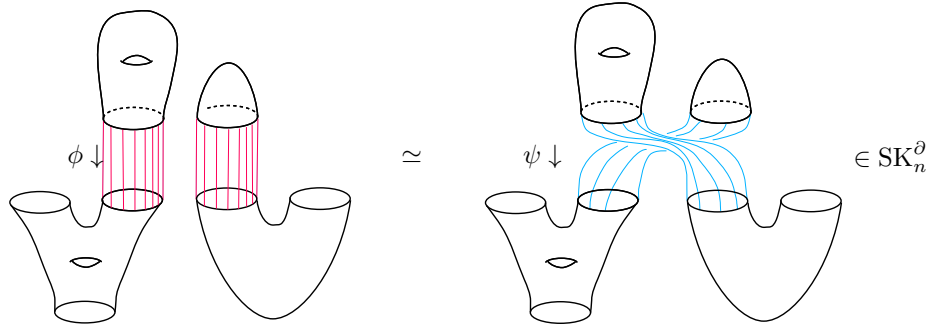}
\caption{Example of an $\SK^{\partial}$-relation}
\label{SK-relation-boundary}
\end{figure}

\carmen{\begin{prop}\label{SKbinvariants} The Euler characteristic and the signature are $\SK^{\partial}$- invariants.\end{prop}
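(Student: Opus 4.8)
The plan is to check that each of $\chi$ and $\sigma$ defines a homomorphism out of the free abelian group on diffeomorphism classes that respects both families of relations in \autoref{def of SK_n with boundary}. Since both invariants are diffeomorphism invariants, it suffices to verify compatibility with the two relations. Relation (1) asserts additivity under disjoint union: for the Euler characteristic this is $\chi(M \sqcup N) = \chi(M) + \chi(N)$, and for the signature it follows from the fact that the intersection form of a disjoint union is the orthogonal direct sum of the intersection forms, so $\sigma(M \sqcup N) = \sigma(M) + \sigma(N)$. The real content is relation (2): we must show that the value on $M_1 \cup_{\phi} \bar{M_2}$ does not depend on the orientation-preserving gluing diffeomorphism $\phi \colon \Sigma \to \Sigma'$.

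The structural observation that makes this work is that in order for $M_1 \cup_{\phi} \bar{M_2}$ to be an $n$-manifold we glue along a codimension-$0$ piece of the boundary, so the closed submanifold $\Sigma \subseteq \partial M_1$ is in fact a union of connected components of $\partial M_1$ (a closed codimension-$0$ submanifold of a closed manifold is open and closed), and likewise for $\Sigma' \subseteq \partial M_2$. In particular the gluing introduces no corners, and the remaining boundary is $(\partial M_1 \setminus \Sigma) \sqcup (\partial M_2 \setminus \Sigma')$, whose diffeomorphism type is independent of $\phi$.

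For the Euler characteristic I would apply the inclusion--exclusion (Mayer--Vietoris) formula to the decomposition of $X = M_1 \cup_{\phi} \bar{M_2}$ into open collar thickenings of $M_1$ and $\bar{M_2}$ meeting in a neighborhood of the gluing locus, which deformation retracts onto $\Sigma$. This yields
\[
\chi(M_1 \cup_{\phi} \bar{M_2}) = \chi(M_1) + \chi(\bar{M_2}) - \chi(\Sigma) = \chi(M_1) + \chi(M_2) - \chi(\Sigma),
\]
whose right-hand side involves only the pieces and not $\phi$; hence it agrees with $\chi(M_1 \cup_{\psi} \bar{M_2})$. (Here $\chi(\Sigma) = \chi(\Sigma')$ since $\phi$ is a diffeomorphism, but the map $\phi$ itself never enters.)

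For the signature I would invoke Novikov additivity: when two oriented $4k$-manifolds with boundary are glued along a closed codimension-$0$ union of boundary components, their signatures add. By the second paragraph this is exactly the present situation, so
\[
\sigma(M_1 \cup_{\phi} \bar{M_2}) = \sigma(M_1) + \sigma(\bar{M_2}) = \sigma(M_1) - \sigma(M_2),
\]
again independent of $\phi$, and therefore equal to $\sigma(M_1 \cup_{\psi} \bar{M_2})$; when $n \neq 4k$ both sides vanish and there is nothing to prove. The one point requiring care is the precise form of Novikov additivity: it is essential that we glue along full closed boundary components, so that no corners arise and Wall's non-additivity correction term is absent. I would therefore stress the codimension-$0$, boundaryless nature of $\Sigma$ established above, which places us squarely in the regime where the naive additivity of the signature holds.
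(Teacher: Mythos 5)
Your proof is correct and takes essentially the same approach as the paper: for the Euler characteristic, the Mayer--Vietoris inclusion--exclusion argument carried over from the closed case, and for the signature, Novikov additivity, which applies precisely because the gluing locus $\Sigma$ is a union of closed boundary components (boundaries are never cut in the $\SK^{\partial}$-relation), so no Wall non-additivity correction can arise. The paper's proof states exactly these two points, only more tersely; your write-up supplies the details it leaves implicit.
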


\begin{proof}
That the signature of manifolds with boundary is a $\SK^{\partial}$-invariant follows from Novikov additivity \cite{Novikov} and the fact that in our definition of the $\SK^{\partial}$-relation the boundary components of $\SK^{\partial}$-equivalent manifolds remain unchanged by the cut and paste operation.

The argument that shows that the Euler characteristic is a $\SK^{\partial}$-invariant is an exact analog to the argument for closed manifolds.
\end{proof}}


\subsection{Exact sequence for $SK$ and $SK^\partial$} We now relate our definition of $\SK_n^\d$ with the classical $\SK_n$ via an exact sequence. Denote by $C_n$ the Grothendieck group of the monoid of diffeomorphism classes of smooth closed oriented $n$-dimensional nullcobordant manifolds under disjoint union.
\julia{Note that $C_n$ is a free abelian group for the following reason. The Grothendieck group $\mathrm{Gr}(\mathcal{M}_n)$ is a free abelian group on diffeomorphism classes of  connected  $n$-manifolds. Due to the cancellation property, the inclusions of the monoid of diffeomorphism classes of $n$-dimensional nullcobordant manifolds into its Grothendieck group and into $\mathrm{Gr}(\mathcal{M}_n)$ are injective.
Hence the Grothendieck group $C_n$ is a subgroup of $\mathrm{Gr}(\mathcal{M}_n)$ and therefore is free abelian.}


 \begin{theorem} \label{Exact SK-sequence}
 For every $n \geq 1$ the following sequence is exact 
 
   \begin{center}
    $\begin{CD}
     0 @>>> \SK_{n} @>\alpha>{[M] \mapsto [M]}> \SK^{\partial}_{n} @>\beta>{[N] \mapsto [\partial N]}> C_{n-1} @>>>0
     \end{CD}.$
    \end{center} 
 \end{theorem}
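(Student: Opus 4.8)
The plan is to check the four ingredients of exactness of
\[
0\to\SK_n\xrightarrow{\ \alpha\ }\SK^\partial_n\xrightarrow{\ \beta\ }C_{n-1}\to 0
\]
separately: well-definedness of $\alpha,\beta$, surjectivity of $\beta$, injectivity of $\alpha$, and $\ker\beta=\operatorname{im}\alpha$. Three of these are quick. The map $\alpha$ descends because the defining relation of $\SK_n$ is exactly the case $\Sigma=\partial M_1,\ \Sigma'=\partial M_2$ of relation (2) for $\SK^\partial_n$; the map $\beta$ descends and lands in $C_{n-1}$ because $\partial(M_1\cup_\phi\bar M_2)=(\partial M_1\setminus\Sigma)\sqcup(\partial M_2\setminus\Sigma')$ is independent of the gluing $\phi$, and $\partial N$ always bounds $N$, hence is nullcobordant. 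Since a closed manifold has empty boundary, $\beta\alpha=0$. For surjectivity, every class in $C_{n-1}$ is a $\Z$-combination of classes of nullcobordant $(n-1)$-manifolds $W$, and any such $W$ bounds some compact $N$, so $\beta[N]=[W]$.

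For injectivity of $\alpha$ I would use the numerical invariants. By \autoref{SKbinvariants} the Euler characteristic and signature give homomorphisms $\chi,\sigma\colon\SK^\partial_n\to\Z$, and they restrict along $\alpha$ to the usual $\chi,\sigma$ on $\SK_n$. By the computation of the closed $\SK$-groups in \cite{KKNO} (cf.\ \autoref{SKinvariants}), the pair $(\chi,\sigma)$ embeds $\SK_n$ into $\Z^2$. Hence if $\alpha(x)=0$ then $\chi(x)=\sigma(x)=0$, and therefore $x=0$.

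The substance of the theorem is the inclusion $\ker\beta\subseteq\operatorname{im}\alpha$. Given $\xi\in\ker\beta$, I would first write $\xi=[P]-[Q]$ with $P,Q$ honest compact oriented manifolds (collecting the positive and negative generators); then $[\partial P]-[\partial Q]=\beta(\xi)=0$ in the free abelian group $\mathrm{Gr}(\mathcal{M}_{n-1})\supseteq C_{n-1}$, which forces an orientation-preserving diffeomorphism $h\colon\partial P\to\partial Q$. The goal becomes the \emph{filling-independence identity}
\[
[P]-[Q]=\bigl[\,P\cup_h\bar Q\,\bigr]-\bigl[\,Q\cup_{\mathrm{id}}\bar Q\,\bigr]\quad\text{in }\SK^\partial_n,
\]
whose right-hand side lies in $\operatorname{im}\alpha$ because both glued manifolds are closed; equivalently one must show $[\,P\sqcup(Q\cup_{\mathrm{id}}\bar Q)\,]=[\,(P\cup_h\bar Q)\sqcup Q\,]$. (That this is forced can be checked against $\chi$, $\sigma$ via Novikov additivity, and against $\beta$, which is reassuring but of course not a proof.)

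The main obstacle is producing this identity from relation (2): that relation only re-glues a fixed $\Sigma\subseteq\partial M_1$, and cannot on its own move the piece $\bar Q$ from being attached to $Q$ to being attached to $P$. The device that turns it into a single cut-and-paste move is an auxiliary cylinder. Writing $W=\partial Q$, I would apply relation (2) with
\[
M_1=P\sqcup Q,\qquad M_2=Q\sqcup(W\times I),
\]
$\Sigma=\partial P\sqcup\partial Q\subseteq\partial M_1$ and $\Sigma'=\partial Q\sqcup(W\times\{0\})\subseteq\partial M_2$, comparing the two orientation-preserving gluings $\phi,\psi\colon\Sigma\to\Sigma'$ that differ by interchanging the components of $\Sigma'$: one sends $\partial P\xrightarrow{h}\partial Q$ (attaching $\bar Q$ to $P$) and $\partial Q$ onto the cylinder end, the other sends $\partial Q\xrightarrow{\mathrm{id}}\partial Q$ (attaching $\bar Q$ to $Q$) and $\partial P$ onto the cylinder end. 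Since attaching the collar $W\times I$ does not change the diffeomorphism type, the first gluing yields $(P\cup_h\bar Q)\sqcup Q$ and the second yields $P\sqcup(Q\cup_{\mathrm{id}}\bar Q)$, so relation (2) equates their classes. The remaining work is the orientation bookkeeping needed to verify that $\phi,\psi$ are genuinely orientation preserving and that the collar is attached compatibly; I expect this, rather than any conceptual difficulty, to be the fiddly part of the argument.
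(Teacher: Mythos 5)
Your proposal is correct and takes essentially the same route as the paper: injectivity of $\alpha$ via the Euler characteristic and signature together with the KKNO computation of the closed $\SK$-groups, and $\ker\beta\subseteq\operatorname{im}\alpha$ via exactly the same cylinder-swap application of relation (2) — the paper writes $x=[M]-[N]$, glues $\bar M$ to $N$ and forms the double $DM=M\cup_{\id}\bar M$ using the auxiliary collar $\partial N\times I$, which is the mirror image of your move doubling $Q$ and attaching $\bar Q$ to $P$. The only cosmetic difference is that the paper leaves the (trivial) surjectivity of $\beta$ implicit, while you spell it out.
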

   \begin{proof}
 Note that the map $\alpha \colon \SK_{n} \to \SK^{\partial}_{n}$ taking a class of manifolds in $\SK_n$ to a class containing the same manifolds in $\SK^{\partial}_{n}$ is well-defined, since every relation from the definition of $\SK_n$ is also a relation in the definition of $\SK^{\partial}_n.$  The map $\beta$ that takes a class of manifolds to the diffeomorphism class of the boundary is well-defined, since the equivalence relation from the definition of $\SK^{\partial}_n$ preserves the boundary.

 We show exactness at the middle term. It is clear from the definition that $\im ~  \alpha \subseteq \ker \beta.$ Let us show the reverse inclusion. Let $x \in \ker \beta.$ Every element of $\SK^{\partial}_n$ can be written in the form $x=[M]-[N],$ where $M, N$ are compact smooth oriented $n$-manifolds with boundary (not necessarily connected). 

Let $\bar{M}$ be the copy of $M$ with the opposite orientation and let $DM$ be the double of $M,$ i.e. $DM=M \cup_{\id} \bar{M}.$ Note that $DM$ is a closed manifold. Since $C_{n-1}$ is a free abelian group and $\beta(x)=[\partial M]-[\partial N] =0$ we conclude that the $\partial M$ and $\partial N$ are diffeomorphic. Hence we may glue $\bar{M}$ to $N$ along the boundary.
We will call this gluing diffeomorphism $\phi$ (it does not have to be unique, we just pick one) and denote by $L$ the closed manifold, which is the result of this gluing. Therefore,
  \[
  DM=M \cup_{\id} \bar{M},  
  \] and
  \[
  L=N \cup_{\phi} \bar{M}.
  \]
Hence in $\SK^{\partial}_n$, 
 \begin{eqnarray*}
 [N] + [DM] &=&[N \cup_{\id} (\partial N \times [0,1])] + [M \cup_{\id} \bar{M}]\\
   &=&[N \cup_{\phi} \bar{M} ]+ [(\partial N \times [0,1]) \cup_{\phi} M]\\
&=& [L] + [M].
 \end{eqnarray*}
 Consequently,
  \[
    x=[M]-[N]=[DM]-[L] \in \im \alpha.
     \] See \autoref{image-alpha} for an illustration of such an element.

 Finally, let us show injectivity of the map $\alpha$. Let $R_n$ be the subgroup of $\mathrm{Gr}(\carmen{\mathcal{M}_n})$ generated by the $\SK$-relation $ [ M_1 \cup_{\phi} \bar{M_2} ] - [M_1 \cup_{\psi} \bar{M_2}] $, 
 so that $\SK_n=\textup{Gr}(\mona{\mathcal{M}_n})/R_n$.  Note that the set of elements that generate this relation is closed under summation,
  \begin{eqnarray*}
     &\big( [ M_1 \cup_{\phi} \bar{M_2} ] - [M_1 \cup_{\psi} \bar{M_2}] \big) + \big( [M'_1 \cup_{\phi'} \bar{M'_2}] - [M'_1 \cup_{\psi'} \bar{M'_2}] \big)\\
     & =[ (M_1 \sqcup M'_1)  \cup_{\phi \sqcup \phi'} (\bar{M_2} \sqcup \bar{M'_2})] - [ (M_1 \sqcup M'_1)  \cup_{\psi \sqcup \psi'} (\bar{M_2} \sqcup \bar{M'_2})]. 
    \end{eqnarray*}
    Thus $R_n$ is precisely the set of elements of this form, and similarly for the subgroup $R^\d_n$ of $\mathrm{Gr}(\mathcal{M}_n^\d)$, which generates the $\SK$-relation for manifolds with boundary. 
    \mona{Suppose that $[M]-[N]\in R^{\partial}_n \cap \textup{Gr}(\mona{\mathcal{M}_n}).$  Since $[M]-[N]\in R^\partial_n,$ by \autoref{SKbinvariants} we have that $M\sim_{SK^\partial} N$, and thus they have the same Euler characteristic and signature. Since by \autoref{SKinvariants} the Euler characteristic and signature are the only $\SK$-invariants, it follows that $M\sim_{SK} N$. Therefore,}    \[
  R^{\partial}_n \cap \textup{Gr}(\mona{\mathcal{M}_n})=R_n,
  \]
  and injectivity of $\alpha$ follows.
\raggedbottom
   \end{proof} 
\raggedbottom

\begin{figure}[ht!]
\labellist
\small\hair 2pt
\pinlabel \small {$\simeq$} at 332 525
\pinlabel \small {$\in \SK_n^{\partial}$} at 705 525
\pinlabel \small {$+$} at 180 525
\pinlabel \small {$+$} at 527 525
\pinlabel \tiny {$N$} at 93 465
\pinlabel \tiny {$DM$} at 240 465
\pinlabel \tiny {$N \cup_{\phi} \bar{M} = L $} at 437 465
\pinlabel \tiny {$\partial N \times I \cup_{\phi} M$} at 585 465
\pinlabel \tiny {$ = M$} at 585 448
\pinlabel \small {$\simeq$} at 332 310
\pinlabel \small {$\in \SK_n^{\partial}$} at 705 310
\pinlabel \small {$-$} at 125 310
\pinlabel \small {$-$} at 488 310
\pinlabel \tiny {$M$} at 60 243
\pinlabel \tiny {$N$} at 204 243
\pinlabel \tiny {$DM $} at 436 243
\pinlabel \tiny {$L$} at 581 243
\pinlabel \small {$-$} at 333 90
\pinlabel \small {$\in \im ~\alpha$} at 555 90
\pinlabel \tiny {$M$} at 275 25
\pinlabel \tiny {$N$} at 401 25
\endlabellist
\centering
\includegraphics[scale=0.45]{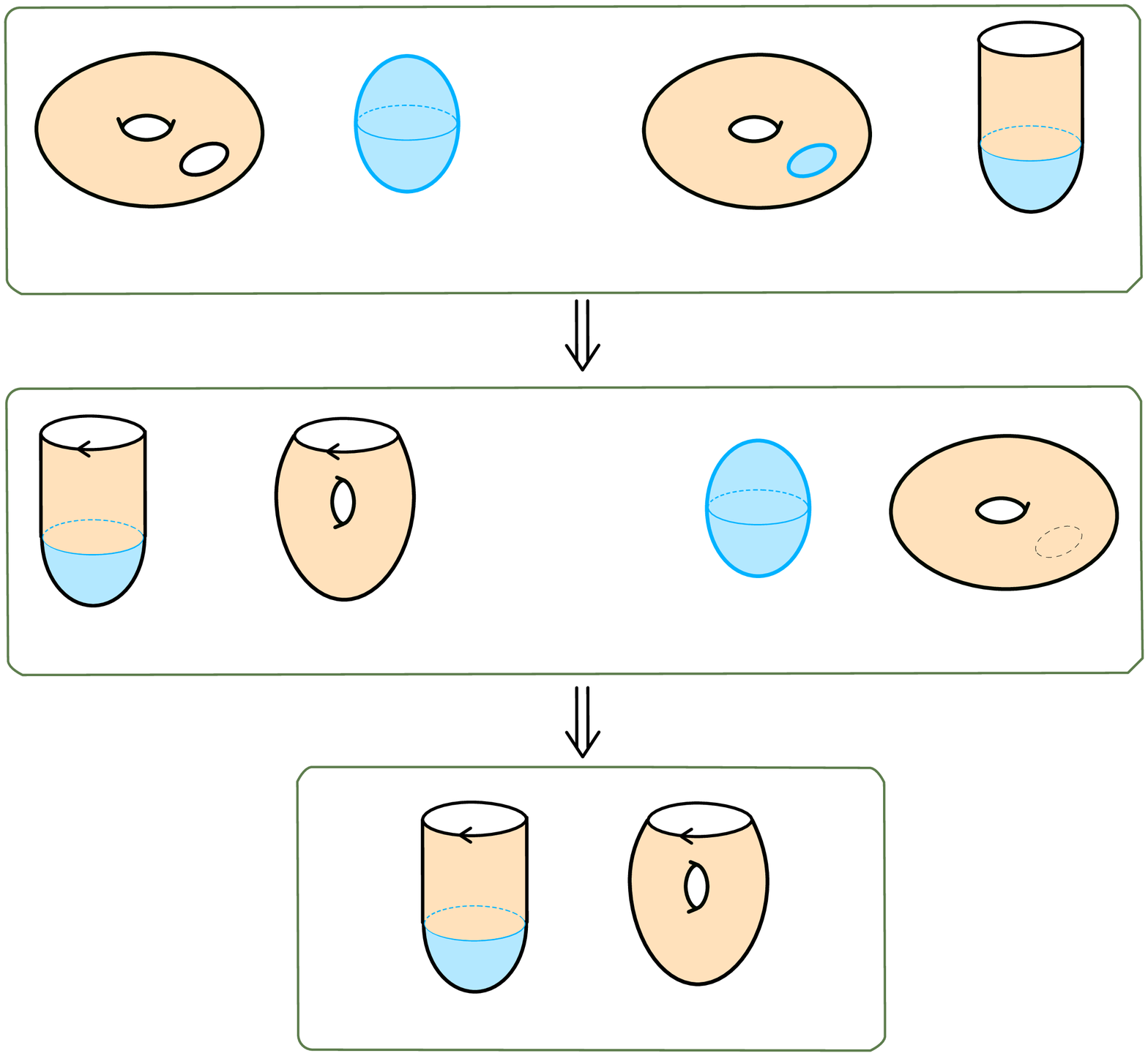}
\caption{Example of an element in $\im (\alpha \colon \SK_n \to \SK_n^{\partial})$}
\label{image-alpha}
\end{figure}
\raggedbottom

\mona{
\begin{sch}\label{scholium}
Classically, there is a more refined relation than that of cutting and pasting called $\SKK$ (``scheiden und kleben, kontrollierbar"=``controllable cutting and pasting") in which we keep track of the gluing diffeomorphisms. 
\laura{The SKK-equivalence relation is:}
\[ [M_1 \cup_{\phi} \bar{M'_1}] \;  - \;  [M_1 \cup_{\psi} \bar{M'_1}] \; \;  = \;  \;  [M_2 \cup_{\phi}\bar{M'_2}] \; -\; [M_2 \cup_{\psi}\bar{M'_2}]\]
 \noindent for  compact oriented manifolds $M_1, M_1'$ and $M_2, M_2'$ such that $\partial M_1=\partial M_2$ and $\partial M_1'=\partial M_2'$, and orientation preserving diffeomorphisms 
 $\phi, \psi \colon \partial M_1 \to \partial M_1'$.
The resulting $\SKK_n$-groups obtained by modding out by the $\SKK$-equivalence relation
  have been interpreted as  Reinhardt vector field bordism groups in \cite{KKNO} and have also been shown to arise as $\pi_0$ of the Madsen-Tillman spectra $MTSO(n)$  \cite{Ebert}. 
  If we define an $\SKK^\partial$ relation analogously but allow
   $M_1, M_1', M_2, M'_2$ to be manifolds with boundary, $\Sigma \subseteq \partial M_i$ and $\Sigma' \subseteq \partial M_i'$ closed submanifolds for $i=1,2$, and $\phi, \psi \colon \Sigma \to \Sigma'$ orientation preserving diffeomorphisms, then we would have
   \[ [M_1 \cup_{\phi} \bar{M'_1}] \;  - \;  [M_1 \cup_{\psi} \bar{M'_1}] \; \;  = \;  \;  [\Sigma\times I\cup_\phi \Sigma'\times I]- [\Sigma\times I\cup_\psi \Sigma'\times I], \] where the latter is zero. Thus if we defined an $SKK_n^\partial$ group by modding out by this relation, we would have $SKK_n^\partial\cong SK_n^\partial$, in contrast to the classical case where we have a surjective map $SKK_n\to SK_n$, which is not an isomorphism.\footnote{This observation is due to George Raptis.}
\end{sch}
}

    \section{$K$-theory of categories with squares}\label{squareK}

   \subsection{Overview of Campbell and Zakharevich's squares $K$-theory} 
   
   \mona{This subsection is an exposition of the definitions  and results from Campbell and Zakharevich's work in progress on $K$-theory of categories with squares.}
       
    \begin{definition}\label{squarecat}
    	A category with squares is a category $\C$ equipped with a choice of basepoint object $O$,  two subcategories $c\C$ and $f\C$ of morphisms referred to as cofibrations (denoted \begin{tikzcd}[column sep=scriptsize]\hphantom{} \arrow[r, tail]&\hphantom{} \end{tikzcd}) and cofiber maps (denoted \begin{tikzcd}[column sep=scriptsize] \hphantom{}\arrow[r, two heads] &\hphantom{}\end{tikzcd}), and distinguished squares	
    	$$\begin{tikzcd}[column sep=tiny, row sep=tiny]
    	A\arrow[rr, tail] \arrow[dd, two heads] && B\arrow[dd,two heads]\\
    	& \square &\\
    	C\arrow[rr, tail] && D
    	\end{tikzcd}$$
    	satisfying the following conditions:
    	\begin{enumerate}[1{)}]
    		\item $\C$ has coproducts and distinguished squares are closed under coproducts.
    		\item Distinguished squares are commutative squares in $\C$ and compose horizontally and vertically.
    		\item Both $c\C$ and $f\C$ contain all isomorphisms of $\C$.
    		\item If a commutative square satisfies the property that either both horizontal maps or both vertical maps are isomorphisms, then the square is distinguished.
    	\end{enumerate}
	    \mona{A map of categories of squares is a functor that preserves distinguished basepoint objects and distinguished squares.}
    \end{definition}

   Campbell and Zakharevich developed the framework of categories with squares in order to describe a generalized construction of K-theory \mona{spaces}, inspired by the Waldhausen construction. 
%
We review their construction of $K$-theory for a category with squares. 
Let $[k]$ denote the category $0\to 1\to \cdots\to k$.
    
    \begin{definition}
    Let $\C$ be a category with squares. Define $\C^{(k)}$ to be the  subcategory of $\Fun([k], \C)$ whose objects are sequences of cofibration maps $$C_0\rightarrowtail C_1\rightarrowtail\cdots\rightarrowtail C_k,$$ and whose morphisms  are natural transformations in which every commutative square is distinguished.
       \end{definition}
    
    Varying over $k$ by composing cofibrations and distinguished squares, we get a simplicial category, denoted $\C^\bullet$.
    The squares $K$-theory of $\C$ is defined, analogously to the definition for Waldhausen categories, as follows: 
    \begin{definition}
    Let $\C$ be a category with squares. The \emph{squares K-theory space of $\C$} is 
    $$K^\square(\C)\simeq \Omega_O |N_{\sbt}\C^\bullet|$$
    where $\Omega_O$ is the based loop space, based at the distinguished object $O\in N_0\C^{(0)}$.
   
    \end{definition}
    
     \mona{The following computation of $K_0$ for categories with squares is due to Campbell and Zakharevich; the proof will appear in upcoming work. We record the result here since we will need it later on.}
    
    \begin{lemma}[Campbell-Zakharevich]\label{K0}
    Let $\C$ be a category with squares with basepoint $O$ satisfying:
    \begin{enumerate}
    \item $O$ is initial or terminal in $c\C$.
    \item $O$ is initial or terminal in $f\C$.
    \item For all objects $A, B\in\C$, there exists some object $X\in\C$ and distinguished squares:
    \begin{center}
    \begin{tikzcd}[column sep=tiny, row sep=tiny]
    O\arrow[rr, tail]\arrow[dd, two heads] && A\arrow[dd, two heads]\\
    &\square &\\
    B\arrow[rr, tail] && X
    \end{tikzcd}
    \hspace{1cm}
    \begin{tikzcd}[column sep=tiny, row sep=tiny]
    O\arrow[rr, tail]\arrow[dd, two heads] && B\arrow[dd, two heads]\\
    &\square &\\
    A\arrow[rr, tail] && X
    \end{tikzcd}
    \end{center}
    \end{enumerate}
    
    Then
    $$K_0^\square(\C)\cong \mathbb{Z}\lbrace ob\C\rbrace/\sim$$
    
    where $\sim$ is the equivalence relation generated by  
    \begin{enumerate}
    	\item $[O]=0$
    	\item$[A]+[D]=[B]+[C]$ for every distinguished square
    	\begin{tikzcd}[column sep=tiny, row sep=tiny]
    		A\arrow[rr, tail] \arrow[dd, two heads] && B\arrow[dd, two heads]\\
    		&\square &\\
    		C\arrow[rr, tail] && D
    	\end{tikzcd}.

    \end{enumerate}
    \end{lemma}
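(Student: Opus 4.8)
The plan is to unwind the definitions into an explicit fundamental-group computation. Since $K^\square(\C)=\Omega_O|N_{\sbt}\C^\bullet|$ we have $K_0^\square(\C)=\pi_0 K^\square(\C)=\pi_1(|N_{\sbt}\C^\bullet|,O)$, and because the realization of a bisimplicial set may be taken one direction at a time, $|N_{\sbt}\C^\bullet|\simeq |Y_\bullet|$ for the simplicial space $Y_k:=|N_{\sbt}\C^{(k)}|=B\C^{(k)}$. The first observation is that $\C^{(0)}$ is the category of objects of $\C$ and cofiber maps, i.e. $f\C$ (the components of a morphism in $\C^{(k)}$ are the verticals of distinguished squares, hence cofiber maps); by hypothesis (2) this has an initial or terminal object, so $Y_0=B(f\C)$ is contractible. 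In particular $\pi_0|Y_\bullet|=\mathrm{coeq}(\pi_0 Y_1\rightrightarrows\pi_0 Y_0)$ is trivial, so $|Y_\bullet|$ is connected. The degeneracies of $Y_\bullet$ are inclusions of subcomplexes, so $Y_\bullet$ is proper, and for a proper simplicial space with contractible $Y_0$ the skeletal filtration yields the presentation
\[ \pi_1(|Y_\bullet|)\;\cong\; F(\pi_0 Y_1)\big/\big\langle\, [d_1\tau]=[d_0\tau][d_2\tau]\ (\tau\in\pi_0 Y_2),\quad \text{degenerate edges}=e\,\big\rangle, \]
since collapsing both ends of each $1$-cell leaves one generator per component of $Y_1=B\C^{(1)}$, while the $2$-cells coming from $Y_2=B\C^{(2)}$ impose the face relations.

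Next I would rewrite this presentation in terms of objects of $\C$. A generator is the class $[A\rightarrowtail B]$ of a cofibration, and a relation comes from a length-two cofibration sequence $A\rightarrowtail B\rightarrowtail C$ in $\C^{(2)}$, namely $[A\rightarrowtail C]=[B\rightarrowtail C][A\rightarrowtail B]$. Assuming $O$ is initial in $c\C$ (hypothesis (1); the terminal case is dual), each object $A$ gives a cofibration $O\rightarrowtail A$, and applying the relation to $O\rightarrowtail A\rightarrowtail B$ gives $[A\rightarrowtail B]=g_B g_A^{-1}$, where $g_A:=[O\rightarrowtail A]$. Hence $\pi_1(|Y_\bullet|)$ is generated by $\{g_A\}_{A\in\mathrm{ob}\,\C}$, with $g_O=e$ because $O\rightarrowtail O$ is degenerate.

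The key step is commutativity, and this is exactly where hypothesis (3) enters. For objects $A,B$ choose $X$ together with the two distinguished squares of (3). The first square is a morphism in $\C^{(1)}$ from $(O\rightarrowtail A)$ to $(B\rightarrowtail X)$, so $[O\rightarrowtail A]=[B\rightarrowtail X]$; feeding $O\rightarrowtail B\rightarrowtail X$ into the face relation then gives $g_X=g_A g_B$, while the second square gives symmetrically $g_X=g_B g_A$. Thus $g_Ag_B=g_Bg_A$ and $\pi_1(|Y_\bullet|)$ is abelian, so I may write it additively. To finish I would produce mutually inverse homomorphisms with $\Z\{\mathrm{ob}\,\C\}/\!\sim$. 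Send $[A\rightarrowtail B]\mapsto[B]-[A]$: this is well defined because a morphism $(A\rightarrowtail B)\to(A'\rightarrowtail B')$ of $\C^{(1)}$ is a distinguished square, whose relation $[A]+[B']=[B]+[A']$ gives $[B]-[A]=[B']-[A']$; degenerate edges map to $0$; and the $\C^{(2)}$ relation maps to $[C]-[A]=([C]-[B])+([B]-[A])$. Conversely send $[A]\mapsto g_A$: this respects $[O]=0\mapsto e$, and for a distinguished square with corners $A,B,C,D$, reading it as a morphism $(A\rightarrowtail B)\to(C\rightarrowtail D)$ of $\C^{(1)}$ gives $[A\rightarrowtail B]=[C\rightarrowtail D]$, i.e. $g_B-g_A=g_D-g_C$, which is precisely $[A]+[D]=[B]+[C]$. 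These maps invert each other on generators, yielding $K_0^\square(\C)\cong\Z\{\mathrm{ob}\,\C\}/\!\sim$.

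I expect the main obstacle to be the topological input of the first paragraph: rigorously extracting the generators-and-relations presentation of $\pi_1$ from the realization of $Y_\bullet$, in particular reducing the contractible-$Y_0$ case to the reduced ($Y_0=\ast$) case and checking properness, so that only $\pi_0 Y_1$ and $\pi_0 Y_2$ contribute and the higher homotopy of the $Y_k$ is irrelevant. Once this is in place, the remaining steps are bookkeeping with distinguished squares, cofiber maps, and the three hypotheses.
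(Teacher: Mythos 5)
There is nothing in the paper to compare your argument against: the paper explicitly does \emph{not} prove this lemma. It is attributed to Campbell and Zakharevich with the remark that ``the proof will appear in upcoming work,'' and is recorded only because it is needed for the $\SK_n^\partial$ computation in \autoref{mnfldsec}. So your proposal must be judged on its own merits.

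Judged that way, it is correct, and it is the natural argument: the squares analogue of Waldhausen's computation of $K_0$ as $\pi_1|wS_\bullet\sC|$. Each step checks out: $K_0^\square(\C)=\pi_1(|N_{\sbt}\C^\bullet|,O)$; realizing one simplicial direction first to get $Y_k=B\C^{(k)}$; hypothesis (2) making $Y_0=B(f\C)$ contractible, hence $|Y_\bullet|$ connected and all basepoint/connecting-path choices irrelevant; the presentation of $\pi_1$ with one generator per component of $Y_1$ and one relation $[d_1\tau]=[d_0\tau][d_2\tau]$ per component of $Y_2$, plus killing degenerate edges; hypothesis (1) reducing generators to $g_A=[O\rightarrowtail A]$ (initiality is what guarantees that $O\rightarrowtail A\rightarrowtail B$ composes to the canonical $O\rightarrowtail B$, so this step is sound); hypothesis (3) forcing $g_Ag_B=g_X=g_Bg_A$, which is exactly what that hypothesis exists for; and the two mutually inverse homomorphisms, whose well-definedness you verify against all the defining relations.

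Three caveats for a complete write-up. First, as you acknowledge, the presentation lemma for $\pi_1$ of a proper simplicial space with contractible zeroth space is the only real technical content; it is standard (skeletal filtration, with only the $2$-skeleton contributing to $\pi_1$), but it must be proved or cited precisely. Second, your identification $\C^{(0)}=f\C$ is the intended reading but not the literal one: the paper's condition ``every commutative square is distinguished'' is vacuous at $k=0$. Your reading is the right one---it is forced if the degeneracy $s_0\colon\C^{(0)}\to\C^{(1)}$ is to exist, since distinguished squares have cofiber maps as vertical edges---but this should be said, because hypothesis (2) gives contractibility of exactly $B(f\C)$ and of nothing larger. Third, you treat only the case where $O$ is initial in $c\C$; the terminal variant does dualize (with generators $[A\rightarrowtail O]$ and $[O\rightarrowtail A]=[A\rightarrowtail O]^{-1}$ via the $2$-simplex $O\rightarrowtail A\rightarrowtail O$), but since hypothesis (3) fixes the orientation of its squares regardless of which case holds, the initial/terminal combinations are not purely formal duals and deserve the few lines they take.
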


  \mona{The proof that  the $K$-theory space $K^\square(\C)$ is an infinite loop space will also appear in upcoming work of Campbell, Zakharevich, and collaborators.    
   
   }

\subsection{Category with squares from a Waldhausen category} 
\mona{The idea of Campbell's and Zakharevich's squares $K$-theory is to provide a generalization of both Waldhausen and subtractive categories. In particular, given a Waldhausen category $\C$ one can associate to it a category with squares such that the Waldhausen and squares $K$-theories agree. Actually, there are two different choices of categories with squares that one can associate to a Waldhausen category, and here we describe the one that will serve our purposes in \autoref{Eulersec}. We comment on our choices in \autoref{waldchoice} below.}

\begin{defn}\label{squareC}
Let $\sC$ be a Waldhausen category. Define an associated category with squares $\sC^\square$ in the following way. The horizontal maps are the cofibrations $\rightarrowtail$ in $\sC$, and the vertical maps are all maps. The distinguished squares are the squares 
\[ \begin{tikzcd}[column sep=tiny, row sep=tiny]
A\arrow[rr, tail] \arrow[dd] && B\arrow[dd]\\
&\square &\\
C\arrow[rr, tail] && D
\end{tikzcd}
\]
with the property that the unique map $C\cup_A B\rightarrow D$ is a weak equivalence. The distinguished basepoint object is the zero object.
\end{defn}

\begin{prop}
The category $\sC^\square$ satisfies the axioms of a category with squares from \autoref{squarecat}. 
\end{prop}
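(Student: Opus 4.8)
The plan is to verify the four axioms of \autoref{squarecat} one at a time, translating each into a standard property of the ambient Waldhausen category $\sC$. Throughout I would rely on the characterization of distinguished squares in $\sC^\square$: a commutative square with horizontal cofibrations is distinguished precisely when the induced map from the pushout $C\cup_A B$ to $D$ is a weak equivalence. This reduces every axiom to a question about cofibrations, pushouts, and weak equivalences, all of which are governed by the Waldhausen axioms (cofibrations are closed under pushout along arbitrary maps, cobase change preserves weak equivalences, the gluing lemma holds, etc.).

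First I would check axiom (3): both $c\sC$ (the cofibrations) and $f\sC$ (all maps) contain the isomorphisms of $\sC$. This is immediate, since isomorphisms are in particular cofibrations (a Waldhausen category requires every isomorphism to be a cofibration) and trivially every isomorphism is a map. Next, for axiom (2), I would observe that horizontal composition and vertical composition of distinguished squares correspond to pasting of pushout-type squares; the key tool is the pasting lemma for pushouts together with the two-out-of-three property for weak equivalences. Concretely, given two horizontally composable squares whose outer maps $C\cup_A B\to D$ and (its continuation) are weak equivalences, the composite square's comparison map factors as a composite of maps which are weak equivalences by cobase change, so two-out-of-three yields that the total square is distinguished; the vertical case is analogous, using that all vertical maps are allowed.

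Then I would address axiom (4): if a commutative square has both horizontal maps isomorphisms or both vertical maps isomorphisms, it is distinguished. For the horizontal case, the comparison map $C\cup_A B\to D$ is an isomorphism when $A\to B$ and $C\to D$ are isomorphisms (the pushout of an isomorphism is an isomorphism), hence a weak equivalence. For the vertical case, when $A\to C$ and $B\to D$ are isomorphisms, the gluing lemma (the cube/gluing axiom for weak equivalences in a Waldhausen category) shows the induced map on pushouts is a weak equivalence. Finally, for axiom (1), I would use that $\sC$ has coproducts (or at least finite coproducts, supplied by the zero object together with pushouts over the zero object) and that distinguished squares are closed under coproducts because pushouts commute with coproducts and a coproduct of weak equivalences is a weak equivalence.

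I expect the main obstacle to be axiom (2), specifically the vertical composition of distinguished squares, since this is where one must genuinely invoke the gluing lemma rather than a formal pushout-pasting argument: the comparison map for the vertically composed square is not simply a composite of two comparison maps, and one has to argue that a weak equivalence is preserved under the relevant cobase changes and then combine the two layers. The horizontal composition, by contrast, follows cleanly from the pasting lemma for pushouts. I would also take some care with axiom (1) if $\sC$ only has finite coproducts, in which case ``closed under coproducts'' should be read as closed under finite coproducts, compatibly with the $K$-theory construction.
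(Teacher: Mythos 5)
Your overall strategy coincides with the paper's: verify the four axioms one at a time, using that a square in $\sC^\square$ is distinguished exactly when the comparison map $C\cup_A B\to D$ is a weak equivalence, and reduce everything to the pasting lemma for pushouts together with the gluing axiom. Axioms (1), (3) and (4) go through as you describe; for the vertical case of (4) the paper uses the lighter observation that a pushout of an isomorphism is an isomorphism, so $C\cup_A B\cong B\cong D$, but your gluing-lemma argument (comparing the pushouts of $C\leftarrow A\rightarrowtail B$ and $C\leftarrow C\rightarrowtail D$ via the isomorphisms $A\to C$, $B\to D$) is also valid, since isomorphisms are weak equivalences.

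The one step that needs repair is your appeal, in axiom (2), to the ``two-out-of-three property for weak equivalences.'' A Waldhausen category does \emph{not} satisfy two-out-of-three in general --- that is the optional saturation axiom, not part of the definition --- so it cannot be invoked here. Fortunately you do not need it: in the factorization of the comparison map of a horizontally composed square,
\[
C \cup_A E \;\cong\; (C\cup_A B)\cup_B E \;\longrightarrow\; D \cup_B E \;\longrightarrow\; F,
\]
every factor is already a weak equivalence (the first is an isomorphism by pasting of pushouts; the second is a weak equivalence by the gluing axiom applied to the rows $C\cup_A B \leftarrow B \rightarrowtail E$ and $D \leftarrow B \rightarrowtail E$; the third is a weak equivalence because the second square is distinguished), so the composite is a weak equivalence simply because the weak equivalences form a subcategory, i.e.\ are closed under composition. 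The same replacement handles the vertical case via $E\cup_A B\cong E\cup_C(C\cup_A B)\to E\cup_C D\to F$. Relatedly, the asymmetry you predict --- horizontal composition following ``cleanly from pasting'' while only the vertical case ``genuinely needs gluing'' --- is not there: the two cases are structurally parallel, and each needs the gluing axiom exactly once, for the middle map of the factorization (this is precisely how the paper treats both). Likewise, ``cobase change preserves weak equivalences'' is not a Waldhausen axiom but itself a consequence of gluing; citing the gluing axiom directly, as the paper does, keeps the argument within the actual axioms.
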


\begin{proof}
We check the four axioms. For (1), $\sC$ has coproducts because it is a Waldhausen category. Suppose that 
$$C\cup_A B\xrightarrow{\simeq} D \text{   and   } C'\cup_{A'} B'\xrightarrow{\simeq} D'.$$ Note that since pushouts and coproducts commute with each other, and since $$C\cup_A B \sqcup C'\cup_{A'} B' \xrightarrow{\simeq} D\sqcup  D'$$ by the gluing axiom (\cite[p. 326]{waldhausen}), distinguished squares are closed under coproducts.

To check axiom (2), suppose we compose two distinguished squares horizontally
\[ \begin{tikzcd}[column sep=tiny, row sep=tiny]
A\arrow[rr, tail] \arrow[dd] && B\arrow[rr, tail]\arrow[dd] && E\arrow[dd]\\
& \square && \square \\
C\arrow[rr, tail] && D\arrow[rr, tail] && F
\end{tikzcd} \]
We have a chain of weak equivalences $$C\cup_A E \cong (C\cup_A B)\cup_B E \xrightarrow{\simeq} D\cup_B E \xrightarrow{\simeq} F,$$ where the first weak equivalence is by the gluing axiom.  

Now suppose we compose two distinguished squares vertically
\[ \begin{tikzcd}[column sep=tiny, row sep=tiny]
A\arrow[rr, tail] \arrow[dd] && B\arrow[dd] \\
& \square & \\
C\arrow[rr, tail]\arrow[dd] && D\arrow[dd]\\
& \square &\\
E\arrow[rr, tail] && F
\end{tikzcd} \]

Similarly, we have $$E\cup_A B\cong E\cup_C(C\cup_A B)\xrightarrow{\simeq} E\cup_C D \xrightarrow{\simeq} F, $$ where again the first weak equivalence is by the gluing axiom.

Axiom (3) is immediate since the isomorphisms are contained in the cofibrations in a Waldhausen category, and we don't have any restrictions on the vertical maps.

To check axiom (4), suppose first that the two vertical morphisms in a commuting square in $\sC$
\[ \begin{tikzcd}[column sep=tiny, row sep=tiny]
A\arrow[rr, tail] \arrow[dd] && B\arrow[dd]\\
&\square &\\
C\arrow[rr, tail] && D
\end{tikzcd}
\]
are isomorphisms. Then $C\cup_A B\cong B\cong D$ and the square is a pushout square. Similarly, if the horizontal maps are isomorphisms, $C\cup_A B\cong C\cong D$, and again the square is a pushout.
\end{proof}

\medskip

\begin{prop}\label{sqwald=wald}
The Waldhausen $K$-theory $K^\mathrm{Wald}(\sC)$  agrees with the $K$-theory $K^\square(\sC^\square)$ of the associated category with squares from \autoref{squareC}.
\end{prop}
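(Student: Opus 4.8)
The plan is to build a direct comparison between the two simplicial objects underlying the two $K$-theory spaces and to show it is a levelwise equivalence, so that it induces an equivalence after geometric realization and looping. Recall $K^{\mathrm{Wald}}(\sC)=\Omega|N_\bullet wS_\bullet\sC|$, while $K^\square(\sC^\square)=\Omega_O|N_\bullet(\sC^\square)^\bullet|$, where $(\sC^\square)^{(k)}$ is the category of strings of cofibrations $C_0\rightarrowtail\cdots\rightarrowtail C_k$ with distinguished-square morphisms. I would first record the key local fact, proved using the gluing axiom: a transformation of strings is a distinguished-square morphism exactly when the induced maps on all subquotients $C_j/C_i\to C'_j/C'_i$ are weak equivalences. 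Indeed, distinguished squares compose horizontally, so one extracts the comparison $C_i'\cup_{C_i}C_j\xrightarrow{\simeq}C_j'$ and, passing to cofibers, $C_j/C_i\xrightarrow{\simeq}C_j'/C_i'$; conversely the gluing lemma (\cite[p.~326]{waldhausen}) recovers the distinguished-square condition from weak equivalences of subquotients.

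Next I would define a comparison functor $q_k\colon(\sC^\square)^{(k)}\to wS_k\sC$ sending a string $C_0\rightarrowtail\cdots\rightarrowtail C_k$ to the $S_k$-object with subquotients $A_{ij}=C_j/C_i$, i.e. quotienting the whole string by $C_0$. By the local fact, $q_k$ carries distinguished-square morphisms to weak equivalences, so it is well defined, and it is a homotopy equivalence of nerves: the section $s_k\colon wS_k\sC\to(\sC^\square)^{(k)}$ taking $\{A_{ij}\}$ to its top row $0=A_{00}\rightarrowtail A_{01}\rightarrowtail\cdots\rightarrowtail A_{0k}$ satisfies $q_ks_k=\mathrm{id}$, while the collapse $C_0\to 0$ defines a natural transformation $\mathrm{id}\Rightarrow s_kq_k$ through distinguished squares, hence a homotopy $B(s_kq_k)\simeq\mathrm{id}$.

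The crucial, least automatic step is to check that $q_\bullet$ is a map of simplicial categories, since the two constructions carry \emph{different} simplicial structures: the faces of $(\sC^\square)^\bullet$ compose and drop cofibrations, whereas those of $S_\bullet$ quotient rows and columns away. The point is exactly that quotienting by $C_0$ intertwines them: the outer face $d_0$ of $(\sC^\square)^\bullet$ drops $C_0$ and then $q$ re-quotients by $C_1$, which equals quotienting the original string by $C_1$, i.e. the outer face $d_0$ of $S_\bullet$; the inner faces, terminal face, and degeneracies match in the same way. I expect verifying this compatibility, together with the mild bookkeeping that the subquotients $C_j/C_i$ are only defined up to the contractible space of choices built into Waldhausen's $S_\bullet$, to be the main technical obstacle.

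Finally, with $q_\bullet$ established as a levelwise homotopy equivalence of simplicial spaces $B(\sC^\square)^\bullet\to BwS_\bullet\sC$, I would invoke the realization lemma to conclude $|N_\bullet(\sC^\square)^\bullet|\simeq|N_\bullet wS_\bullet\sC|$; since $q_0$ sends the basepoint $O$ to the zero object of $S_0\sC$, applying $\Omega_O$ yields $K^\square(\sC^\square)\simeq K^{\mathrm{Wald}}(\sC)$.
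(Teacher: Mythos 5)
Your proposal attempts to build the comparison with $wS_\bullet\sC$ by hand, whereas the paper's proof identifies $(\sC^\square)^\bullet$ outright with Thomason's simplicial category $wT_\bullet\sC$ of \cite[p.~334]{waldhausen} and cites the Thomason--Waldhausen zig-zag $wT_\bullet\sC\xleftarrow{\;\simeq\;}wT^+_\bullet\sC\xrightarrow{\;\simeq\;}wS_\bullet\sC$. As written, your argument has a genuine error and a genuine gap. The error is the ``key local fact'': the converse direction is false. That distinguished squares induce weak equivalences on subquotients does follow from gluing, but the implication from weak equivalences on subquotients back to the condition $C_i'\cup_{C_i}C_j\xrightarrow{\simeq}C_j'$ is a form of Waldhausen's \emph{extension axiom}, an optional axiom that a general Waldhausen category need not satisfy, and it cannot be derived from the gluing lemma (gluing needs weak equivalences as input on all three entries of the pushout diagrams, which you do not have). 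Concretely, in finite pointed CW complexes with weak homotopy equivalences, compare the identity cofibration $S^1\rightarrowtail S^1$ with the inclusion of a meridian circle into a nontrivial knot complement, via the identity on $S^1$: both subquotients are weakly contractible, so the subquotient condition holds, yet the pushout map is the non-equivalence $S^1\hookrightarrow$ (knot complement), so the square is not distinguished. You invoke exactly this false direction to see that $s_k$ lands in $(\sC^\square)^{(k)}$. The conclusion there is still true, but for a different reason: a morphism of $wS_k\sC$ has the filtration maps $A_{0i}\to A_{0i}'$ themselves weak equivalences, and applying the gluing lemma to the map of pushout diagrams $\bigl(A_{0i}'\leftarrow A_{0i}\rightarrowtail A_{0j}\bigr)\to\bigl(A_{0i}'\leftarrow A_{0i}'\rightarrowtail A_{0j}'\bigr)$ (vertical maps $\mathrm{id}$, $f_i$, $f_j$) gives $A_{0i}'\cup_{A_{0i}}A_{0j}\xrightarrow{\simeq}A_{0j}'$ directly. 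So the biconditional must be replaced by the two one-sided implications you actually use.

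The gap is the step you yourself defer as ``the main technical obstacle'': that $q_\bullet$ is a map of simplicial categories. This is precisely the hard point, because $C_j/C_i$ is only a choice of pushout, so identities such as $(C_j/C_0)/(C_1/C_0)=C_j/C_1$ and your claimed equation $q_ks_k=\mathrm{id}$ hold only up to canonical isomorphism, not on the nose; a levelwise equivalence that is not simplicial gives you nothing about realizations. This coherence-of-choices problem is exactly what the intermediate construction $wT^+_\bullet\sC$ (filtrations \emph{equipped with} compatible choices of quotients) in the paper's citation exists to handle, so the honest completion of your route either reproduces that zig-zag or fixes choices carefully: choose once and for all a quotient for every cofibration of $\sC$, normalized so that the chosen quotient of an identity cofibration is the zero object, and use Waldhausen's definition of $S_k\sC$ as functors $\mathrm{Ar}[k]\to\sC$ (whose faces and degeneracies are strict reindexings); since your choices then depend only on the individual cofibration, and since ``being a pushout square'' is a property rather than structure, one can check that $q_\bullet$ so defined is strictly simplicial, while $q_ks_k$ is only naturally isomorphic to the identity, which still suffices for the levelwise nerve equivalence. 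Until one of these repairs is carried out, the realization-lemma step cannot be run, so the proof as proposed is incomplete exactly where the cited Thomason--Waldhausen comparison does real work.
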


\begin{proof}
By definition, $K^\square(\sC^\square)$ is the realization of the bisimplicial set with $(p, q)$-simplices given by 

\[
\begin{tikzcd}[column sep=tiny, row sep=tiny]
A_{00}\arrow[rr, tail]\arrow[dd] && A_{01}\arrow[rr, tail]\arrow[dd] &&\cdots\arrow[rr, tail] && A_{0p}\arrow[dd]\\
& \square && \square && \square\\
A_{10}\arrow[rr, tail]\arrow[dd] && A_{11}\arrow[rr, tail]\arrow[dd] &&\cdots\arrow[rr, tail] && A_{1p}\arrow[dd]\\
& \square &  & \square  & & \square \\
\vdots\arrow[dd] && \vdots\arrow[dd] && \ddots &&\vdots\arrow[dd]\\
& \square & & \square  & & \square \\
A_{q0}\arrow[rr, tail] && A_{q1}\arrow[rr, tail] &&\cdots\arrow[rr, tail] && A_{qp}
\end{tikzcd}
\]

\noindent in which each square is distinguished. Thus it is the nerve of the category whose objects are sequences of cofibrations $$A_0 \rightarrowtail A_1 \rightarrowtail \dots  \rightarrowtail  A_n$$ and morphisms maps of such diagrams that satisfy the condition that for every $i \leq j$ the induced map
$$A_i'\cup_{A_i} A_j \to A_j'$$ is a weak equivalence. 
Thus the above is precisely the bisimplicial set obtained by applying the  nerve to Thomason's simplicial category $wT\sbt\ \!\sC$ defined in \cite[page 334]{waldhausen}.

By Thomason-Waldhausen, there is a zig-zag of equivalences via some intermediate construction
\[ \xymatrix{ wT\sbt\ \! \sC && wT^+\sbt\ \!\sC \ar[ll]_-\simeq \ar[rr]^-\simeq && wS\sbt \ \!\sC.}\]
Therefore, via a zig-zag, we have an equivalence of $K$-theory spaces
 \begin{equation*}K^\square(\sC^\square)\simeq K^\mathrm{Wald} (\sC). \qedhere\end{equation*} \raggedbottom
\end{proof}
\raggedbottom
 \medskip

\begin{rem}\mona{
Campbell and Zakharevich noticed that for a Waldhausen category $\sC$, one can also associate to it a category with squares where the vertical maps are cofiber maps, and they prove directly that the square $K$-theory of this category is equivalent to $K^\mathrm{Wald}(\sC)$. Therefore, it is also equivalent to the square $K$-theory of the category from   \autoref{squareC}.}
\end{rem}

\section{$K$-theory of manifolds  with boundary}\label{mnfldsec}
In this section we use the framework described in \autoref{squareK} to define a $K$-theory \mona{space} for the category of $n$-dimensional compact smooth manifolds with boundary, which recovers as $\pi_0$ the scissors congruence group $\SK_n^\partial$. 

\subsection{The category with squares for manifolds with boundary} We start by defining a category with squares structure on the category  $\Mnfldbd_n$ of  smooth compact $n$-dimensional manifolds with boundary and smooth maps.

    \begin{definition}\label{Mnfsq}
Let $\Mnfldbd_n$ be the category of smooth compact $n$-dimensional manifolds with boundary and smooth maps. We define the subcategories  $c\Mnfldbd_n$ of horizontal maps (denoted $\rightarrowtail$) and and $ f\Mnfldbd_n$  of vertical maps (denoted $\hookrightarrow$) to both be given by the morphisms in $\Mnfldbd_n$ which are smooth embeddings of manifolds with boundary \carmen{$f\colon N \rightarrow M$} such that \carmen{$\partial N$} is mapped to a submanifold with trivial normal bundle, and such that each connected component of the boundary \carmen{$\partial N$} is either mapped entirely onto a boundary component or entirely into the interior of \carmen{$M$}.
We define distinguished squares to be those commutative squares in $\Mnfldbd_n$     	
$$\begin{tikzcd}
    	N	\arrow[r, tail]\arrow[d, hook] &M\arrow[d, hook]\\
    	M'\arrow[r, tail] & M \cup_{N} M'.
    	\end{tikzcd}$$ that are pushout squares, i.e. such that $ M\cup_{N} M'$ is a smooth manifold. The chosen basepoint object is the empty manifold.
    \end{definition}

   \begin{exmp}
\autoref{SK-pic} gives pictorial examples of distinguished squares in $\Mnfldbd_n$ .
\end{exmp}

\begin{figure}[ht!]
\labellist
\small\hair 2pt
\pinlabel \small {$\square$} at 140 150
\pinlabel \small {$\square$} at 610 150
\endlabellist
\centering
\includegraphics[scale=0.5]{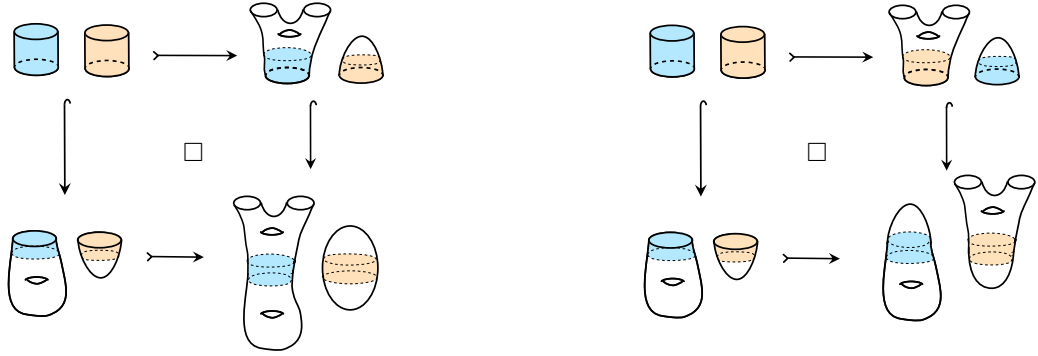}
\caption{Two examples of distinguished squares} 
\label{SK-pic}
\end{figure}



    \newpage
    \begin{lemma}
    The category	$\Mnfldbd_n$ with the structure from \autoref{Mnfsq} satisfies the axioms of a  category with squares from  \autoref{squarecat}.
    \end{lemma}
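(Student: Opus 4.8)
The plan is to check the four axioms of \autoref{squarecat} in order, after first verifying the implicit requirement that the two classes of maps in \autoref{Mnfsq} really form subcategories. The only genuinely geometric input is how the boundary and its normal bundle behave under composition of embeddings. Since every object is $n$-dimensional, each cofibration (equivalently, cofiber map) $f\colon N\to M$ is a codimension-zero embedding, so $\partial N$ sits inside $M$ as a codimension-one submanifold. Given a second such map $g\colon M\to P$, I would track a single boundary component $\Sigma$ of $\partial N$. If $\Sigma$ lands in the interior of $M$, then, because $g$ is codimension zero and interior points map to interior points, $\Sigma$ lands in the interior of $P$ with its (trivial) normal line bundle unchanged. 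If instead $\Sigma$ maps onto a boundary component $B$ of $M$, then $B$ is a whole component of $\partial M$, so the hypothesis on $g$ sends $B$---hence $\Sigma$---either onto a boundary component of $P$ (normal bundle trivial, being the collar direction) or into the interior of $P$ with trivial normal bundle. Thus $gf$ is again an embedding of the required type, and $c\Mnfldbd_n$, $f\Mnfldbd_n$ are closed under composition and contain the identities.

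The formal axioms are then quick. For (3), a diffeomorphism carries each boundary component onto a boundary component with trivial collar normal bundle, so isomorphisms lie in both subcategories. For (4), if both horizontal maps of a commuting square are isomorphisms, then in the pushout span $M\leftarrow N\to M'$ the leg $N\to M$ is invertible, so the pushout is canonically $M'$ and the comparison map to the corner is the bottom isomorphism; hence the square is a pushout whose corner, being an object of the category, is a smooth manifold, so the square is distinguished. The case of two vertical isomorphisms is symmetric. For (1), the coproduct is disjoint union, again a compact smooth $n$-manifold with boundary; since disjoint union commutes with pushouts, the disjoint union of two distinguished squares has corner diffeomorphic to $D_1\sqcup D_2$, so distinguished squares are closed under coproducts.

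The substantive axiom is (2). Given two horizontally adjacent distinguished squares, I would invoke the pasting lemma for pushouts (in topological spaces): the outer rectangle is again a pushout, and its pushout object is literally the same space as the corner of the second square. Since that corner is a smooth manifold by hypothesis, the outer pushout is smooth; and its new horizontal edge is the composite of two cofibrations, which is a cofibration by the closure statement above. Hence the outer rectangle is distinguished, and vertical composition is identical after interchanging the two (coinciding) classes of maps. I expect the main obstacle to be conceptual rather than computational: one worries that gluing two smooth pushouts might spoil smoothness, but the pasting lemma identifies the composite pushout with an already-smooth manifold, so no new smoothing is needed. The only place demanding real care is the normal-bundle bookkeeping in the closure step, where it is precisely the codimension-zero nature of the embeddings that lets the normal bundle of $\partial N$ pass unchanged through the intermediate manifold.
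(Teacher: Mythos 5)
Your proposal is correct and follows essentially the same route as the paper's own (quite terse) proof: disjoint union for axiom (1), pasting of pushout squares for axiom (2), and the universal property of the pushout when one leg of the span is invertible for axiom (4). In fact you are more careful than the paper, which leaves implicit both the closure of $c\Mnfldbd_n = f\Mnfldbd_n$ under composition (with the attendant normal-bundle and boundary-component bookkeeping) and the observation that the corner of a pasted rectangle is already a smooth manifold, so your extra verifications are welcome detail rather than a divergence in method.
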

    
    \begin{proof}
    	The coproduct in  $\Mnfldbd_n$ is given by disjoint union of manifolds, and the collection of distinguished squares is closed under disjoint union. Pushout squares are commutative and compose horizontically and vertically. Consider the diagram	
    	$$\begin{tikzcd}
    	A\arrow[r, tail, "i"] \arrow[d, hook, "j"]  &  B \arrow[d, hook, "j'"] \arrow[ddr, bend left, "f"] & \\
    	C 	\arrow[r, tail, "i'"]  \arrow[drr, bend right, "f'"] & D \arrow[dr, dotted, "l"] & \\
    	& & R.	\end{tikzcd}$$
    	If $j'$ is an isomorphism then we can define the map $l$ uniquely as $f j'^{-1}$; similarly if $i'$ is an isomorphism. Therefore in both cases this is a pushout diagram. Hence $\Mnfldbd_n$ satisfies the definition of a category with squares. 	
    \end{proof}

\subsection{The computation of $K_0(\Mnfldbd_n)$} Using \autoref{K0} for the category with squares $\Mnfldbd_n$ defined above, we show that the $K^\square_0$-group agrees with the $\SK_n^\d$-group. 
    
    \begin{thm}
    	For  the manifold category with squares $\Mnfldbd_n$ from \autoref{Mnfsq}, 
    	$$K_0^\square(\Mnfldbd_n)\cong \SK_n^\d.$$
    \end{thm}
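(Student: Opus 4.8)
The plan is to apply the Campbell--Zakharevich computation of $K_0$ (\autoref{K0}) and then identify the resulting presentation with that of $\SK_n^\partial$ from \autoref{def of SK_n with boundary}. First I would verify that $\Mnfldbd_n$ satisfies the three hypotheses of \autoref{K0} with basepoint the empty manifold $\emptyset$: the empty manifold is initial in both $c\Mnfldbd_n$ and $f\Mnfldbd_n$ (there is a unique embedding $\emptyset \rightarrowtail M$, trivially satisfying the cofibration conditions), and for any objects $A, B$ the object $X = A \sqcup B$ together with the two pushout-over-$\emptyset$ squares witnesses the third hypothesis. This yields
\[
K_0^\square(\Mnfldbd_n) \cong \Z\{ob\,\Mnfldbd_n\}/\!\sim,
\]
where $\sim$ is generated by $[\emptyset] = 0$ and $[N] + [M\cup_N M'] = [M] + [M']$ for every distinguished (pushout) square. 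I would first observe that this presentation is already one on diffeomorphism classes: applying the relation to a square whose two vertical maps are a diffeomorphism $M \simar M'$ and $\id_\emptyset$ (distinguished by axiom (4) of \autoref{squarecat}) gives $[M] = [M']$, and the pushout square over $\emptyset$ gives additivity $[M\sqcup M'] = [M] + [M']$.

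It then suffices to show that the two relation sets cut out the same subgroup, i.e.\ that the identity on diffeomorphism classes descends to mutually inverse homomorphisms $\SK_n^\partial \rightleftarrows K_0^\square(\Mnfldbd_n)$; since each is the identity on generators, I only need both to be well defined. For the map $\SK_n^\partial \to K_0^\square(\Mnfldbd_n)$ I must check that the cut-and-paste relation holds in $K_0^\square$: given $\phi, \psi\colon \Sigma \to \Sigma'$, the distinguished square with corner the collar $\Sigma \times I$, embedded as a boundary collar in $M_1$ and, via $\phi$, in $\bar M_2$, has pushout $M_1 \cup_\phi \bar M_2$, so $[\Sigma\times I] + [M_1\cup_\phi \bar M_2] = [M_1] + [\bar M_2]$. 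Running the same square with $\psi$ and subtracting gives $[M_1 \cup_\phi \bar M_2] = [M_1 \cup_\psi \bar M_2]$, so this map is well defined and, being the identity on generators, surjective.

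The crux is the reverse map $K_0^\square(\Mnfldbd_n) \to \SK_n^\partial$, which requires the pushout relation $[N] + [M\cup_N M'] = [M] + [M']$ to hold in $\SK_n^\partial$ for an \emph{arbitrary} distinguished square. Writing $A = \overline{M\setminus N}$ and $A' = \overline{M'\setminus N}$, attached to $N$ along disjoint boundary regions $\Sigma_M, \Sigma_{M'} \subseteq \partial N$ (disjointness being forced by smoothness of $M\cup_N M'$), I would realize both sides as two regluings of a single fixed pair of manifolds, in the spirit of the computation in the proof of \autoref{Exact SK-sequence}. Taking $M_1 = A \sqcup A' \sqcup (\Sigma_M\times I)\sqcup(\Sigma_{M'}\times I)$ and $M_2 = N_1 \sqcup N_2$ (two copies of $N$), with $\Sigma \subseteq \partial M_1$ and $\Sigma'\subseteq \partial M_2$ the full collections of attaching regions, one matching $\psi$ reassembles $M \sqcup M'$ while a second matching $\phi$, differing only by a permutation of the components of the disconnected gluing locus, reassembles $(M\cup_N M')\sqcup N$; in both cases the auxiliary collars are absorbed up to diffeomorphism. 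Since $\phi$ and $\psi$ glue the same $\Sigma$ to the same $\Sigma'$, relation (2) of \autoref{def of SK_n with boundary} identifies the two, yielding the pushout relation in $\SK_n^\partial$.

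The main obstacle is precisely this last step. The naive attempt to move $A'$ from $N$ to $N'$ by a single cut-and-paste fails because the two regluings use different boundary loci, and the cut-and-paste relation only varies the gluing diffeomorphism over a \emph{fixed} locus. The key is to pad with the two collars so that every dangling boundary component produced by the general pushout is itself glued (and then absorbed), forcing the two reassemblies to differ by the single permitted move—a permutation of the components of one disconnected gluing locus—rather than by an uncontrolled rearrangement. Once both maps are shown to be well defined, they are mutually inverse on generators and the isomorphism follows.
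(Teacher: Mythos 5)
Your proposal is correct and takes essentially the same approach as the paper: verify the hypotheses of the Campbell--Zakharevich $K_0$ lemma, obtain the cut-and-paste relation in $K_0^\square$ from collar-padded distinguished squares, and conversely obtain the distinguished-square relation in $\SK_n^\d$ by exhibiting both sides as two regluings of a single fixed pair of manifolds along the same (disconnected) locus, differing by a permutation of components, with the auxiliary collars absorbed by diffeomorphism. The only (immaterial) difference is the decomposition in the last step: the paper argues asymmetrically, taking $M=\mathrm{cl}(B-A)\sqcup(N\times\epsilon)$ and $M'=A\sqcup C$ with gluing locus $N\sqcup N$ and identity versus twist, whereas you cut both sides and use two collars and two copies of the corner manifold.
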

    
    \begin{proof}

    		The empty set is initial in both $c\Mnfldbd_n$ and $f\Mnfldbd_n$. Moreover, for all objects $M$ and $N$ in $\Mnfldbd_n$, there exist pushout squares
    	\begin{center}
    		\begin{tikzcd}[column sep=tiny, row sep=tiny]
    			\emptyset	\arrow[rr, tail]\arrow[dd, hook] &&N\arrow[dd, hook]\\
    			&~~\square &\\
    			M\arrow[rr, tail] && M\sqcup N
    		\end{tikzcd}
    		\hspace{1cm}
    		\begin{tikzcd}[column sep=tiny, row sep=tiny]
    			\emptyset	\arrow[rr, tail]\arrow[dd, hook] &&M\arrow[dd, hook]\\
    			&~~~\square &\\
    			N\arrow[rr, tail] && M\sqcup N.
    		\end{tikzcd}
    	\end{center}
    	Therefore $\Mnfldbd_n$ satisfies the conditions of \autoref{K0}, which gives a description of the relations of the left hand side.

    	First, assume that the relations from $K_0^\square$ hold. To show that these imply the relations in $\SK_n^\d$, we first need to check that the generating objects are compatible (note that  $\SK_n^\d$ is generated by diffeomorphism classes of manifolds, whereas $K_0$ a priori is generated by manifolds). Consider a diffeomorphism $M\xrightarrow{\phi} M'$. Then
    	\begin{center}
    		\begin{tikzcd}[column sep=tiny, row sep=tiny]
    			\emptyset\arrow[rr, tail]\arrow[dd, hook] && M\arrow[dd, hook, "\phi"]\\
    			&~\square &\\
    			\emptyset\arrow[rr, tail] && M'
    		\end{tikzcd}
    	\end{center}
    	is a distinguished square; and so the relations in $K_0^\square$ give that:
    	\begin{align*}
    	[M]+[\emptyset] &= [M']+[\emptyset]\\
    	[M] &= [M']
    	\end{align*}
    	
    	Next, consider the square
    	\begin{center}
    		\begin{tikzcd}[column sep=tiny, row sep=tiny]
    			\emptyset\arrow[rr, tail]\arrow[dd, hook] && M\arrow[dd, hook]\\
    			&~~\square &\\
    			M'\arrow[rr, tail] && M\sqcup M'.
    		\end{tikzcd}
    	\end{center}
    	This is a distinguished square, which means that
    	\begin{align*}
    	[M]+[M'] &= [M\sqcup M']+[\emptyset]\\
    	&=[M\sqcup M'].
    	\end{align*}
    	
    	For the other relation in $\SK_n^\d$, consider
    	compact oriented manifolds $M, M'$, closed submanifolds $\Sigma \subseteq \partial M$ and $\Sigma' \subseteq \partial M'$, and orientation preserving diffeomorphisms $\phi, \psi \colon \Sigma \to \Sigma'$. We want to show that
    	\[ [M \cup_{\phi} M'] =[M \cup_{\psi} M'].\]
    	
    	Consider $(\Sigma\times \epsilon)$ where $\epsilon = [0,\varepsilon]$ for some small $\varepsilon > 0$.
    	We can extend the maps $\phi, \psi$ by the identity to maps $\tilde{\phi}, \tilde{\psi}$ from $(\Sigma\times \epsilon)$ to $(\Sigma'\times \epsilon)$, which we consider inside $M$ and $M'$ respectively as collars of the boundary components. This is possible as the boundary has trivial normal bundle. We have that $M \cup_{\phi} M'$ is diffeomorphic to $M \cup_{\tilde{\phi}} M'$. 
    	Using the maps $\phi, \psi$, consider the squares
    	\begin{center}
    		\begin{tikzcd}[column sep=tiny, row sep=tiny]
    			(\Sigma\times\epsilon)\arrow[rr, tail]\arrow[dd, hook, "\tilde{\phi}"'] && M\arrow[dd, hook]\\
    			&\square &\\
    			M'\arrow[rr, tail] && M\cup_\phi M'
    		\end{tikzcd}
    		\hspace{1cm}
    		\begin{tikzcd}[column sep=tiny, row sep=tiny]
    			(\Sigma \times\epsilon)\arrow[rr, tail]\arrow[dd, hook, "\tilde{\psi}"'] && M\arrow[dd, hook]\\
    			&\square &\\
    			M'\arrow[rr, tail] && M\cup_\psi M'.
    		\end{tikzcd}
    	\end{center}
    	
    	The relation given by distinguished squares implies:
    	\begin{align*}
    	[M\cup_\phi M'] +[(\Sigma \times \epsilon)] &= [M]+[M']\\
    	&= [M\cup_\psi M'] +[(\Sigma \times \epsilon)]
    	\end{align*}
    	Thus, $[M\cup_\phi M']=[M\cup_\psi M']$.
    	
    	In the other direction, assume that the relations for $\SK_n^\d$ hold. Consider relation (1) in \autoref{def of SK_n with boundary} applied to the following:
    	\begin{align*}
    	[\emptyset\sqcup \emptyset] &= [\emptyset]+[\emptyset]\\
    	[\emptyset]&= [\emptyset]
    	\end{align*}
   
    	Thus, for $\emptyset$, the initial object in our category with squares, we have $[\emptyset]=0$.
    	
    	Finally, for relation (2) of \autoref{def of SK_n with boundary}, suppose the following is a distinguished square:
    	\begin{center}
    		\begin{tikzcd}[column sep=tiny, row sep=tiny]
    			A\arrow[rr, tail] \arrow[dd, hook] && B\arrow[dd, hook]\\
    			&\square &\\
    			C\arrow[rr, tail] && D
    		\end{tikzcd}
    	\end{center}
Define $N:= A\cap cl(B-A)\subseteq \d A$, where $cl(B-A)$ is the closure of the complement of $A$ in $B$, i.e. $N$ is the part of the boundary of $A$ that is mapped to the interior of $B$.  	
    	We define $$M:= cl(B-A)\sqcup (N\times \epsilon),$$ $$M':= A\sqcup C.$$ Let $\id: N\sqcup N\to N\sqcup N$ be the identity map; let $\tau: N\sqcup N\to N\sqcup N$ be the twist map.
    	Note that $M\cup_\id M'\cong B\sqcup C$ and $M\cup_\tau M'\cong A\sqcup D$. Then the fact that $[M\cup_\id M']=[M\cup_\tau M']$ gives the relations
    	\begin{align*}
    	[B\sqcup C]&=[A\sqcup D],\\
    	[B]+[C] &=[A]+[D]. \qedhere
    	\end{align*} 
    \end{proof}

    \section{The derived Euler characteristic for manifolds with boundary}\label{Eulersec}
    
 The Euler characteristic map $\chi\colon \mathcal{M}_n^\d \to \Z$ from the monoid of diffeomorphism  classes of smooth compact manifolds is an $\SK$-invariant, since $\chi(M\cup_\Sigma N)=\chi(M)+\chi(N)-\chi(\Sigma)$; thus it factors through $\SK_n^\d$. We show that the Euler characteristic map $\chi\colon \SK_n^\d\to \Z$ lifts to a map of \mona{$K$-theory spaces}. The strategy will be to construct a map of categories with squares from the category of smooth compact oriented manifolds with boundary to the category with squares from \autoref{squareC} associated to the Waldhausen category of \julia{homologically bounded} $\Z$-chain complexes. The main theorem we prove in this section is the following.

 \begin{thm}\label{Eulerthm}
 There is a map of $K$-theory \mona{spaces} $$K^\square(\Mnfldbd)\to K(\Z),$$ which on $\pi_0$ agrees with the Euler characteristic for smooth compact manifolds with boundary.
 \end{thm}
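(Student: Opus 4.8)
The plan is to exhibit the Euler characteristic as the map on $\pi_0$ induced by a map of categories with squares $C_*\colon \Mnfldbd_n\to \ChPerf^\square$, where $\ChPerf$ is the Waldhausen category of homologically bounded $\Z$-chain complexes and $\ChPerf^\square$ is its associated category with squares from \autoref{squareC}. Concretely I would send a compact manifold $M$ to its singular chain complex $C_*(M;\Z)$ and a smooth map to the induced chain map; since singular chains are functorial on all maps, $C_*$ is automatically a functor. A compact manifold has finitely generated homology concentrated in degrees $0$ through $n$, so $C_*(M;\Z)$ is homologically bounded and lies in $\ChPerf$, and the empty manifold is sent to the zero complex, which is the distinguished basepoint.

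Granting that $C_*$ is a map of categories with squares, functoriality of the construction yields $K^\square(\Mnfldbd_n)\to K^\square(\ChPerf^\square)$, and \autoref{sqwald=wald} together with the standard identification $K^{\mathrm{Wald}}(\ChPerf)\simeq K(\Z)$ rewrites the target as $K(\Z)$, giving the asserted map. By the definition of a map of categories with squares I must check only basepoint preservation (done above) and that each manifold pushout square $D=M\cup_N M'$ from \autoref{Mnfsq} is sent to a distinguished square of $\ChPerf^\square$. This in turn has two parts. First, a horizontal embedding $N\rightarrowtail M$ must induce an admissible cofibration: the inclusion of the singular simplices of $N$ into those of $M$ realizes $C_*(N;\Z)\to C_*(M;\Z)$ as a degreewise split monomorphism, with cokernel the relative complex $C_*(M,N;\Z)$, which is again homologically bounded. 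Second, and this is the crux, the canonical comparison
\[ C_*(M;\Z)\cup_{C_*(N;\Z)} C_*(M';\Z)\longrightarrow C_*(D;\Z) \]
must be a quasi-isomorphism. Since the left-hand cofibration is a degreewise split monomorphism, this strict pushout coincides with the homotopy pushout, and the required statement is exactly Mayer--Vietoris/excision: the trivial normal bundle hypothesis provides a collar of $N$, so thickening $M$ and $M'$ produces an open cover of $D$ whose intersection deformation retracts to $N$, and the small-chains theorem identifies the homotopy pushout with $C_*(D;\Z)$.

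For the effect on $\pi_0$ I would combine \autoref{k0intro}, which gives $K_0^\square(\Mnfldbd_n)\cong \SK_n^\partial$ with generators the classes $[M]$, with the description of $K_0$ in \autoref{K0}. The induced map sends $[M]$ to $[C_*(M;\Z)]\in K_0(\Z)\cong\Z$, and the class of a homologically bounded complex in $K_0(\Z)$ is the alternating sum $\sum_i(-1)^i\,\rank H_i(M;\Z)=\chi(M)$. Hence the map is the Euler characteristic. As a consistency check, the distinguished-square relation $[A]+[D]=[B]+[C]$ of \autoref{K0} becomes the inclusion--exclusion identity $\chi(N)+\chi(D)=\chi(M)+\chi(M')$, which is how $\chi$ is seen to be an $\SK^\partial$-invariant.

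The main obstacle I anticipate is reconciling functoriality with the finiteness needed for the target to be $K(\Z)$: singular chains are functorial but have very large chain groups, so one must work in a category constrained only on homology (where, because the ambient Waldhausen category uses only finite coproducts, no Eilenberg swindle is available) or else produce a functorial perfect replacement of $C_*(M;\Z)$. The second genuinely technical point is the distinguished-square verification, where the collar and trivial-normal-bundle hypotheses of \autoref{Mnfsq} are exactly what upgrade the Mayer--Vietoris comparison from a mere map to a quasi-isomorphism.
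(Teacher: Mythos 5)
Your proposal follows essentially the same route as the paper: the singular chain functor into the category with squares $(\ChPerf_\mathbb{Z})^\square$ of homologically bounded complexes, the Mayer--Vietoris argument showing manifold pushout squares map to distinguished squares, the identification $K^\square\big((\ChPerf_\mathbb{Z})^\square\big)\simeq K^{\mathrm{Wald}}(\ChPerf_\mathbb{Z})\simeq K(\Z)$, and the computation on $\pi_0$ via the alternating sum of ranks of homology. Even the ``obstacle'' you flag at the end is resolved exactly as you suggest---the paper works with complexes constrained only on homology and identifies $K(\ChPerf_\mathbb{Z})$ with $K(\Z)$ via the resolution, Gillet--Waldhausen, and approximation theorems.
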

 
We first prove the propositions we need in the next section and give the proof of the theorem at the end of the final section.

\subsection{The lift of the singular chain functor}
Let $\ChPerf_\mathbb{Z}$ be the Waldhausen category of \julia{homologically bounded chain complexes}, i.e., those complexes that are quasi-isomorphic to bounded finitely generated $\Z$-complexes, 
with cofibrations given by levelwise injective maps and weak equivalences given by quasi-isomorphisms. Consider the associated category with squares $(\ChPerf_\mathbb{Z})^\square$ as defined in \autoref{squareC}.

Consider the singular chain functor 
$$S \colon \Mnfldbd_n\to \ChPerf_\mathbb{Z}$$ which sends a compact manifold with boundary to its singular chain complex. The homology of this complex is finitely generated in each degree and bounded since our manifolds are compact.

\begin{prop}\label{singfunctor}
The map $S$ is a map of categories with squares $$S \colon \Mnfldbd_n\to (\ChPerf_\mathbb{Z})^\square$$
\end{prop}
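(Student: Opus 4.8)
The definition of a map of categories with squares (\autoref{squarecat}) asks for two things: that $S$ send the distinguished basepoint to the distinguished basepoint, and that it send distinguished squares to distinguished squares. The basepoint is immediate: the empty manifold has no singular simplices, so $S(\emptyset)$ is the zero complex, which is the basepoint of $(\ChPerf_\mathbb{Z})^\square$. (That $S$ lands in $\ChPerf_\mathbb{Z}$ at all is the compactness observation recorded just above the statement.) I would also first record that $S$ sends horizontal maps to horizontal maps: a horizontal map of $\Mnfldbd_n$ is in particular an injective continuous map $f\colon N\to M$, hence induces an injection on singular simplices, and since $S(N)_k$ and $S(M)_k$ are the free abelian groups on these simplex sets, $S(f)$ is a levelwise split injection, i.e.\ a cofibration in $\ChPerf_\mathbb{Z}$. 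There is no constraint on vertical maps, as these are arbitrary in $(\ChPerf_\mathbb{Z})^\square$.

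Given these, applying $S$ to a distinguished square of $\Mnfldbd_n$ yields a commuting square in $(\ChPerf_\mathbb{Z})^\square$ with cofibrations on the horizontal edges, so the remaining content is the distinguished-square condition of \autoref{squareC}: for a smooth gluing $D=M\cup_N M'$ the canonical comparison map
\[ S(M')\cup_{S(N)} S(M)\longrightarrow S(M\cup_N M') \]
must be a quasi-isomorphism. Since $S(N)\rightarrowtail S(M)$ is a levelwise split injection, the algebraic pushout on the left sits in a short exact sequence of chain complexes
\[ 0\to S(N)\to S(M)\oplus S(M')\to S(M')\cup_{S(N)} S(M)\to 0, \]
whose associated long exact sequence is the algebraic Mayer--Vietoris sequence. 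The comparison map above induces a morphism from this long exact sequence to the topological Mayer--Vietoris sequence
\[ \cdots\to H_k(N)\to H_k(M)\oplus H_k(M')\to H_k(D)\to H_{k-1}(N)\to\cdots \]
of the decomposition $D=M\cup M'$ with $M\cap M'=N$, and this morphism is the identity on the outer terms. A five-lemma argument then forces the middle map $H_k\big(S(M')\cup_{S(N)} S(M)\big)\to H_k(D)$ to be an isomorphism, which is exactly the desired quasi-isomorphism.

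Everything therefore reduces to producing the topological Mayer--Vietoris sequence for the cover $D=M\cup M'$, i.e.\ to checking that this cover is excisive. This is the geometric heart of the argument, and the step I expect to be the main obstacle. The input I would use is the collar-neighborhood structure guaranteed by the hypotheses of \autoref{Mnfsq}: because $N$ is glued along a codimension-one submanifold with trivial normal bundle, it admits a bicollar $N\times(-\varepsilon,\varepsilon)$ in $D$ meeting $M$ and $M'$ in $N\times(-\varepsilon,0]$ and $N\times[0,\varepsilon)$. Thickening $M$ and $M'$ by this collar produces an open cover of $D$ whose members deformation-retract onto $M$ and $M'$ and whose intersection deformation-retracts onto $N$; excision for singular homology then yields the Mayer--Vietoris sequence above. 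Once the collar is in hand, the matching of the algebraic pushout with the homotopy pushout and the five-lemma comparison are purely formal, so the real work is the verification that the smoothness condition making $M\cup_N M'$ a manifold indeed supplies such a collar.
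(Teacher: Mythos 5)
Your overall architecture (basepoint preservation, horizontal maps going to levelwise injections, reduction to showing that the comparison map $S(M')\cup_{S(N)}S(M)\to S(D)$ is a quasi-isomorphism) matches the paper, and your algebraic packaging --- the short exact sequence of the pushout, the two Mayer--Vietoris sequences, and the five lemma --- is a correct, if slightly more roundabout, version of the paper's argument: the paper identifies the algebraic pushout with the subcomplex $S_*(M+M')\subseteq S_*(D)$ of chains that are sums of chains in $M$ and chains in $M'$, and then quotes \cite[Proposition 2.21]{hatcher} that this inclusion is a quasi-isomorphism. So the algebra is fine.

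The genuine gap is in the step you yourself flag as the geometric heart: your justification of excisiveness rests on a misreading of \autoref{Mnfsq}. In the category with squares $\Mnfldbd_n$, the corner object $N$ of a distinguished square is itself an object of $\Mnfldbd_n$, i.e.\ a compact \emph{$n$-dimensional} manifold with boundary, embedded in $M$ and $M'$ in codimension zero; it is $\partial N$, not $N$, that is required to have trivial normal bundle. So $N$ is not a codimension-one separating hypersurface, and the bicollar $N\times(-\varepsilon,\varepsilon)$ you invoke does not exist in $D$ --- dimensionally it would be $(n+1)$-dimensional. (This is exactly why the paper's $K_0$ computation glues along thickened collars $\Sigma\times[0,\varepsilon]$ rather than along $\Sigma$ itself: the hypersurface $\Sigma$ is not an object of the category.) Consequently the thickening/deformation-retraction argument does not apply as written. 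The correct justification is both different and simpler: since the overlap $M\cap M'=N$ is codimension zero, and since the boundary conditions of \autoref{Mnfsq} together with the requirement that the pushout be a smooth manifold rule out a component of $\partial N$ lying in the interiors of both $M$ and $M'$, the interiors of $M$ and $M'$ relative to $D$ already cover $D$. Hence the cover is excisive and Hatcher's small-simplices theorem applies directly, with no collar construction at all. With that substitution your five-lemma argument goes through; as stated, however, the proof establishes the key geometric point for the wrong geometric situation.
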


\begin{proof}
Suppose we have a distinguished square 
\[ \begin{tikzcd}[column sep=tiny, row sep=tiny]
A\arrow[rr, tail] \arrow[dd, hook] && B\arrow[dd,hook]\\
&\square &\\
C\arrow[rr, tail] && D
\end{tikzcd}
\]

\noindent in $\Mnfldbd$, and we apply $S$ to it. In the resulting square in $\ChPerf_\mathbb{Z}$, the horizontal maps are levelwise injective, as required. So in order to show that it is a distinguished square, it remains to show that the map $$S(B)\cup_{S(A)}S(C) \to S(D)$$ 
is a quasisomorphism.

Note that by our construction of distinguished squares in $\Mnfldbd$ the union of the interiors of $B$ and $C$ covers $D.$ Let $S_n(B + C)$ be the subgroup of $S_n(D)$ consisting of $n$-chains that are sums of $n$-chains in $B$ and $n$-chains in $C$. By the standard Mayer-Vietoris argument, the following sequence is exact

\begin{center}
	$\begin{CD}
	0 @>>>  S_n(A) @>>{x \mapsto (x, -x)}> S_n(B) \oplus S_n(C) @>>{(y,z) \mapsto y+z}> S_n (B+C) @>>> 0
	\end{CD}.$
\end{center} 
Hence the chain complex $S_*(B+C)$ is a pushout $S_*(B)\cup_{S_*(A)} S_*(C)$. On the other hand by \cite[Proposition 2.21]{hatcher}, the inclusions $S_n (B+C) \to S_n(D)$ induce isomorphisms on homology groups, which finishes the proof.
\end{proof}

\begin{rem}\label{waldchoice}
The reason for the choices in our \autoref{squareC} of a category with squares associated to a Waldhausen category is precisely to make the above proposition work. 
\mona{Note that we allow \emph{all} maps as vertical maps as opposed to only the cofiber maps, which is the other way one could associate a category with squares to a Waldhausen category.} This more relaxed definition of the distinguished squares is crucial in allowing us to show that distinguished squares in the category of manifolds map to distinguished squares in the category of chain complexes. 
\end{rem}

\subsection{Recovering the Euler characteristic on $\pi_0$}
Lastly, we claim that $K(\ChPerf_\mathbb{Z})\simeq K(\Z)$ via an isomorphism under which $S(M)$ corresponds to $\chi(M)$ on $\pi_0$ for a smooth compact oriented manifold $M$.

 Denote by $\ChB_\mathbb{Z}$ the category of bounded complexes of finitely generated $\mathbb{Z}$-modules; the homologically complexes are those that are quasi-isomorphic to complexes in $\ChB_\mathbb{Z}$. Note that $\ChB_\mathbb{Z} \subseteq \ChPerf_\mathbb{Z}$ and moreover by the discussion in \cite[V. 2.7.2]{kbook} (or alternatively directly by the Waldhausen approximation theorem) this inclusion\julia{, which we denote by $j$}, induces an isomorphism \julia{$j_*$} on $K$-groups. A similar argument for cohomology appears in \cite[Lemma 2.8]{CWZ}.
 
\julia{Consider the following chain of maps  of $K$-theory spaces
	
 \begin{center}
 	$\begin{CD}
 K(\Z)=K(\ModPROJ_{\Z}) @>{\simeq}>{i_*}> G(\Z)=K(\ModFG_{\Z}) @>{\simeq}>{t_*}> K(\ChB_{\Z}) @>{\simeq}>{j_*}> K(\ChPerf_{\Z}), 
 	\end{CD}$
\end{center} 
 where $\ModPROJ_{\Z}$ is the category of finitely generated projective $\Z$-modules, $\ModFG_{\Z}$ is the category of finitely generated $\Z$-modules. Let $i_*$ be the map induced by the inclusion of categories $i \colon \ModPROJ_{\Z} \to \ModFG_{\Z}.$ Let $t \colon \ModFG_{\Z} \to \ChB_{\Z}$ be the inclusion of categories that sends a module $M$ to the chain complex with $M$ in degree $0$ and zeroes in all other degrees.
For a regular noetherian ground ring, in particular for $\Z$, the map $i_*$ is an isomorphism on K-groups by the resolution theorem \cite[Theorem V.3.3.]{kbook}. By the Gillet-Waldhausen theorem \cite[Theorem V.2.2.]{kbook} $t_*$ is a homotopy equivalence and hence induces an isomorphism on K-groups. The map $j_*$ is also a homotopy equivalence by the Waldhausen approximation theorem \cite[V.2.7.4.]{kbook}.
 
 The next proposition describes an inverse of $j_*$ on the zeroth K-groups.
 
}

\begin{prop}\label{k0prop}
The map $q \colon K_0 (\ChPerf_\mathbb{Z}) \to K_0 (\ChB_\mathbb{Z})$ sending a chain complex $C_*$ to the class of the corresponding quasi-isomorphic chain complex $H(C_*) \in \ChB_\mathbb{Z}$ is well-defined and is an isomorphism.
\end{prop}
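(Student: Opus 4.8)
The plan is to identify $q$ with the inverse of the isomorphism $j_*$ on zeroth $K$-groups, using as essential input a \emph{formality} statement over $\Z$. Concretely, I claim that every object $C_*$ of $\ChPerf_\Z$ is quasi-isomorphic to its homology $H(C_*)=\bigoplus_n H_n(C_*)[n]$ equipped with the zero differential. Since $\Z$ is a PID, I may replace $C_*$ by a quasi-isomorphic bounded complex $P_*$ of finitely generated free abelian groups; then the cycles $Z_n$ and boundaries $B_n$ are again free, the short exact sequences $0\to Z_n\to P_n\to B_{n-1}\to 0$ split, and $P_*$ decomposes as a direct sum over $m$ of the two-term complexes $(B_m\hookrightarrow Z_m)$ concentrated in degrees $m+1$ and $m$. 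Each such complex is a free resolution of $H_m(C_*)$ placed in degree $m$, hence quasi-isomorphic to $H_m(C_*)[m]$, and summing gives $C_*\simeq P_*\simeq H(C_*)$. In particular $H(C_*)$ is a bounded complex of finitely generated abelian groups, so it lies in $\ChB_\Z$, and the inclusion $j$ satisfies $j(H(C_*))\simeq C_*$.

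Granting this, I would first verify that the assignment $C_*\mapsto [H(C_*)]$ respects the relations defining $K_0(\ChPerf_\Z)$, so that $q$ is a well-defined homomorphism. A quasi-isomorphism $C_*\simeq C_*'$ induces an isomorphism on homology, so $[H(C_*)]=[H(C_*')]$. For a cofibration sequence $A_*\rightarrowtail B_*\twoheadrightarrow C_*$, i.e.\ a levelwise short exact sequence of complexes, the associated long exact homology sequence is a bounded exact sequence of finitely generated abelian groups, so the Euler characteristics satisfy $\chi(B_*)=\chi(A_*)+\chi(C_*)$ in $G_0(\Z)=K_0(\ModFG_\Z)$, where $\chi(X_*)=\sum_n(-1)^n[H_n(X_*)]$. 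Applying the isomorphism $t_*$, which sends $\chi(X_*)$ to $[H(X_*)]$ (using the shift relation $[M[n]]=(-1)^n[M]$), this becomes the required additivity $[H(B_*)]=[H(A_*)]+[H(C_*)]$ in $K_0(\ChB_\Z)$. Hence $q$ descends to a homomorphism $K_0(\ChPerf_\Z)\to K_0(\ChB_\Z)$.

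It then remains to see that $q$ is inverse to $j_*$. On a class $[C_*]$ one computes $j_*(q[C_*])=j_*[H(C_*)]=[H(C_*)]=[C_*]$ in $K_0(\ChPerf_\Z)$, the last equality being the formality quasi-isomorphism $H(C_*)\simeq C_*$. Thus $j_*\circ q=\id$; since $j_*$ is an isomorphism by the Waldhausen approximation theorem, $q=j_*^{-1}$, and in particular $q$ is an isomorphism.

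The main obstacle is the formality statement of the first paragraph: this is precisely where the hereditary (global dimension $\le 1$) nature of $\Z$ is indispensable, and over a general ring the assignment $C_*\mapsto H(C_*)$ would not even produce a complex quasi-isomorphic to $C_*$. The long exact sequence bookkeeping of the second paragraph, by contrast, is routine.
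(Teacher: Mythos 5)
Your proof is correct, and it arrives at the same structural conclusion as the paper---$q$ is the inverse of $j_*$ on zeroth $K$-groups---but it is organized differently and proves strictly more. The paper's own argument is a terse, presentation-level one: well-definedness is asserted because quasi-isomorphic complexes have isomorphic homology, surjectivity follows from the inclusion $\ChB_{\Z}\subseteq\ChPerf_{\Z}$, and injectivity from the observation that the defining relations of $K_0(\ChB_{\Z})$ are among those of $K_0(\ChPerf_{\Z})$, with the quasi-isomorphism $C_*\simeq H(C_*)$ taken as given (it is built into the wording of the statement). You supply two ingredients the paper elides. First, you actually prove the formality statement over $\Z$, via the splitting of a bounded complex of finitely generated free abelian groups into two-term complexes $(B_m\hookrightarrow Z_m)$; this is exactly what makes the statement meaningful, and, as you note, it is where the hereditary nature of $\Z$ enters. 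Second, you verify that $C_*\mapsto[H(C_*)]$ respects the cofibration-sequence relations $[B_*]=[A_*]+[B_*/A_*]$ via the long exact sequence and Euler characteristics in $G_0(\Z)$---a check that a presentation of $K_0$ of a Waldhausen category genuinely requires and that the paper's well-definedness sentence does not address. In exchange, your bijectivity step imports the previously established fact that $j_*$ is an isomorphism (Waldhausen approximation), whereas the paper's injectivity/surjectivity argument is self-contained on that point; both uses are legitimate. One streamlining worth noting: once you have formality and the isomorphism $j_*$, your second paragraph becomes unnecessary, since $j_*[H(C_*)]=[C_*]$ gives $[H(C_*)]=j_*^{-1}[C_*]$, so your assignment agrees on generators with the homomorphism $j_*^{-1}$ and is therefore automatically well-defined and an isomorphism.
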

\begin{proof}
The map is well-defined since quasi-isomorphic chain complexes have isomorphic homology, and it
is surjective because of the inclusion $\ChB_\mathbb{Z} \subseteq \ChPerf_\mathbb{Z}.$ On the other hand if $Y=q(X)$ vanishes in $K_0 (\ChB_\mathbb{Z})$ then we may identify $X$ with $Y$ in $K_0 (\ChPerf_\mathbb{Z})$ and it will also vanish there, because the set of defining relations (which we quotient out in the presentation for $K_0$) of $K_0( \ChPerf_\mathbb{Z})$ contains the defining relations of $K_0 (\ChB_\mathbb{Z}).$
\end{proof}

\medskip 

Now, recall that the map $\phi \colon K_0( \ChB_\Z) \to K_0(\Z)$ given by $[C_*] \mapsto \chi(C_*)=\sum_i (-1)^i [C_i] $ is an isomorphism \cite[Proposition II.6.6.]{kbook} \julia{and coincides with the composition $ i_{*}^{-1} \circ t_*^{-1}$ on the zeroth K-groups \cite[Theorem II.9.2.2.]{kbook}}. 
 By an easy exercise using the additivity property, the Euler characteristic of a bounded complex only depends on its homology and  $\chi(C_*)=\sum_i (-1)^i [H_i(C_*)].$ 
Thus the composition $\phi \circ q \colon K_0 (\ChPerf_\mathbb{Z} )\to K_0(\Z)$ 
is also an isomorphism and maps $[C_*]$ to $\chi(C_*)=\sum_i (-1)^i [H_i(C_*)]$. \julia{It coincides with the composition $ i_{*}^{-1} \circ t_*^{-1} \circ j_*^{-1} \colon  K_0 (\ChPerf_\mathbb{Z} )\to K_0(\Z).$}

 \begin{proof}[Proof of \autoref{Eulerthm}.]
 From \autoref{singfunctor}, the singular chain functor $S \colon \Mnfldbd_n\to (\ChPerf_\mathbb{Z})^\square$ is a map of categories with squares when the right hand side is given the structure of a category with squares from \autoref{squareC}. This then induces a map on \mona{$K$-theory spaces}
  $$K^\square(\Mnfldbd)\to K^\square\big((\ChPerf_\mathbb{Z})^\square\big).$$
  By \autoref{sqwald=wald} the target is  $K(\Z)$. By \autoref{k0prop} and the  discussion following it, the map on $\pi_0$ agrees with the Euler characteristic. 
 \end{proof}
%



 \bibliographystyle{amsalpha}
  \bibliography{biblio}

\begingroup%
\setlength{\parskip}{\storeparskip}

\end{document}